\documentclass[11pt]{amsart}
\usepackage[utf8]{inputenc}
\usepackage{amssymb,amsmath}
\usepackage{graphicx}
\usepackage{color}
\usepackage{epstopdf}
\usepackage{tikz}
\usepackage{pgfplots}
\usepackage{enumitem}
\usepackage[leftcaption]{sidecap}
\usepackage{float}
\usepackage{algorithm}
\usepackage{setspace}
\usepackage{algpseudocode}
\usepackage{mathtools}
\usepackage[hidelinks]{hyperref}
\usepackage{algorithm}
\usepackage{algpseudocode}
\usepackage{xfrac}

\usepackage{accents}
\usepackage{multicol} 
\setlength{\multicolsep}{3.0pt plus 1.0pt minus 0.75pt}
\usepackage{soul}
\usepackage{subcaption}
\newtheorem{theorem}{Theorem}

\theoremstyle{definition}

\newtheorem{example}[theorem]{Example}

\newtheorem{lemma}[theorem]{Lemma}

\theoremstyle{remark}
\newtheorem{remark}[theorem]{Remark}

\newtheorem{assumption}[theorem]{Assumption}
\numberwithin{theorem}{section}
\numberwithin{equation}{section}
\numberwithin{table}{section}
\numberwithin{figure}{section}
%
%
\def\Vo{\V_{\ker}}

\def\Vc{\V_\text{c}}

\def\cHo{\cH_{\ker}}

\def\calAo{\calA_{\ker}}
%

%







%
\def\N{\mathbb{N}}

\def\R{\mathbb{R}}

\def\calA{\mathcal{A}}
 
\def\calB{\mathcal{B}}

\def\cH{\mathcal{H}}

\def\calN{\mathcal{N}}

\def\Q{\mathcal{Q}}

\def\V{\mathcal{V}}
\def\calV{\V}

\def\eps{\varepsilon}

\definecolor{blau}{RGB}{0, 51, 255}
\definecolor{hellblau}{RGB}{153, 204, 255}
\definecolor{hellrot}{RGB}{255, 0, 0}
\definecolor{firebrick}{RGB}{176, 34, 34} 
\definecolor{deep_pink}{RGB}{255, 20, 147} 
\definecolor{sky_blue}{RGB}{74, 112, 139}
\definecolor{slate_blue}{RGB}{71, 60, 139}
\definecolor{chartreuse}{RGB}{118, 238, 0}
\definecolor{chartreuseL}{RGB}{228, 255, 150}
\definecolor{light_blue}{RGB}{178, 223, 238}
\definecolor{dodge_blue}{RGB}{17, 78, 138}
\definecolor{code_backg}{RGB}{238, 216, 174}
\definecolor{myBlue1}{RGB}{101,149,239}  
\definecolor{myBlue2}{RGB}{113,104,238} 
\definecolor{myBlue3}{RGB}{30,144,255} 
\definecolor{myGreen1}{RGB}{154,204,50} 
\definecolor{myGreen2}{RGB}{69,169,0} 
\definecolor{myGreen3}{RGB}{154,205,50} 
\definecolor{myGreen4}{RGB}{105,139,34} 
\definecolor{myRed1}{RGB}{210,105,30} 
\definecolor{myRed2}{RGB}{165,42,42} 
\definecolor{myRed3}{RGB}{139,26,26} 
\definecolor{myLGray}{RGB}{225,225,225} 
\definecolor{mycolor1}{rgb}{0.00000,0.44700,0.74100}%
\definecolor{mycolor2}{rgb}{0.85000,0.32500,0.09800}%
\definecolor{mycolor3}{rgb}{0.92900,0.69400,0.12500}%
\definecolor{mycolor4}{rgb}{0.49400,0.18400,0.55600}%
\definecolor{mycolor5}{rgb}{0.46600,0.67400,0.18800}%
\definecolor{mycolor6}{rgb}{0.30100,0.74500,0.93300}%
\definecolor{mycolor7}{rgb}{0.63500,0.07800,0.18400}%

\DeclareMathOperator{\id}{id}

\DeclareMathOperator{\image}{im}

\newcommand{\dx}{\ensuremath{\, \mathrm{d}x }}
\newcommand{\ds}{\ensuremath{\, \mathrm{d}s }}

\textheight=215mm
\textwidth=150mm
\evensidemargin=30.0mm
\oddsidemargin=30.0mm
\topmargin=-1mm
\hoffset=-25.4mm
\allowdisplaybreaks
\begin{document}
\title{Probabilistic Time Integration for Semi-explicit PDAEs$^*$} 
\author[]{R.~Altmann$^\dagger$, A.~Moradi$^{\dagger}$}
\address{${}^{\dagger}$ Institute of Analysis and Numerics, Otto von Guericke University Magdeburg, Universit\"atsplatz 2, 39106 Magdeburg, Germany}
\email{\{robert.altmann, afsaneh.moradi\}@ovgu.de}
\thanks{${^*}$ This project is funded by the Deutsche Forschungsgemeinschaft (DFG, German Research Foundation) through the project 446856041.\\ 
This article will be published in {\em Statistics and Computing}. }
%
%
\date{\today}
\keywords{}
\begin{abstract}
This paper deals with the application of probabilistic time integration methods to semi-explicit partial differential--algebraic equations of parabolic type and its semi-discrete counterparts, namely semi-explicit differential--algebraic equations of index~$2$. The proposed methods iteratively construct a probability distribution over the solution of deterministic problems, enhancing the information obtained from the numerical simulation. Within this paper, we examine the efficacy of the randomized versions of the implicit Euler method, the midpoint scheme, and exponential integrators of first and second order. By demonstrating the consistency and convergence properties of these solvers, we illustrate their utility in capturing the sensitivity of the solution to numerical errors. Our analysis establishes the theoretical validity of randomized time integration for constrained systems and offers insights into the calibration of probabilistic integrators for practical applications.
\end{abstract}
%
%
\maketitle
%
{\tiny {\bf Key words.} PDAEs, semi-explicit systems, probabilistic numerics, randomization\\  
\indent
{\tiny {\bf AMS subject classifications.}  {\bf 65M12}, {\bf 65J15}, {\bf 65L80}}} 
%
%
%
\section{Introduction}\label{Sec.intro}
In the last decade, randomized time integration methods for ordinary, partial, and operator differential equations have been studied extensively; see~\cite{chkrebtii2016,ConGSSZ17,lie2019,abd2020,lie22}. These methods aim to statistically quantify the uncertainty introduced by the time discretization. 
For this, classical probabilistic solvers iteratively establish a probability measure over the numerical solution of deterministic initial value problems of the form 
\begin{align}\label{eq:IVP}
    \dot{u}(t) =& f(t,u(t)), \qquad \text{for $0 \leq t \leq T$,}\nonumber\\[-5mm]
    &\\
    u(0) =& u^0 \in\R^d,\nonumber
\end{align}
offering more comprehensive information than a single (deterministic) solve. This means that, instead of yielding just one solution path, the probabilistic approach offers a distribution over possible solutions. Consequently, such methods capture the uncertainty inherent in the numerical solution process, especially in regions where the solution is not directly computed.
Let~$\psi_t\colon [0,T]\times\R^d \rightarrow \R^d$ denote the flow map induced by \eqref{eq:IVP}, meaning that $u(t) = \psi_t(0,u^0)$. On the other hand, we consider a discretization scheme based on a (constant) time step size~$\tau > 0$ with $N \coloneqq T /\tau \in \N$ and corresponding time points $t^n \coloneqq n\tau$ for $n \in [N] \coloneqq \{0, 1,\ldots, N\}$. With this, 
a numerical one-step method can be expressed by a mapping~$\Psi_t\colon[0,T]\times\R^d \rightarrow \R^d$ with  
\[
    u^{ n+1 } = \Psi_{ \tau } (t^n,u^n), \qquad  n \in [N].
\]
Here, $u^n$ denotes the approximation of $u(t^{n}) = \psi_{n\tau}(t^0,u^0)$. Typically, these methods produce a single discrete solution, often accompanied by some form of an error indicator. However, they do not fully quantify the remaining uncertainty in the path statistically. In order to capture the sensitivity of the solution to numerical errors, while maintaining consistent convergence properties from classical deterministic integrators, the probabilistic interpretation of the numerical solution introduced in~\cite{ConGSSZ17} considers the randomized scheme
\[ 
    U^{n+1} 
    = \Psi_{\tau}(t^n,U^n) + \xi^{n}(\tau), \qquad n=0,1,\dots
\]
with $U^0 = u^0$ and appropriately scaled independent and identically distributed (i.i.d.) random variables~$\xi^{n}(\tau)$ with values in $\R^d$. This then results in a sequence of random variables $U^n$ approximating $u(t^n)$. We would like to emphasize that probabilistic integrators may neither inherently provide more accurate solutions than classical deterministic methods, nor are they necessarily computationally cheaper. They prove useful, however, in statistical inference settings, particularly in tasks such as integrating chaotic dynamical systems and solving Bayesian inverse problems~\cite{chkrebtii2016,ConGSSZ17}.

The purpose of this paper is to address the construction and rigorous analysis of probabilistic time integration methods for constrained systems. 
Besides, we consider the operator case leading to so-called partial differential--algebraic equations (PDAEs); see~\cite{LamMT13} for an introduction. To be more precise, we consider a (semi-linear) parabolic system 
\[
    \dot{u}(t) + \calA u(t) = f (t, u)
\]
together with a linear constraint of the form $\calB u(t) = g(t)$ included by the Lagrangian method, cf.~\cite{EmmM13,Alt15}. In the corresponding semi-discretized setting, i.e., after the application of a spatial discretization scheme, this yields a semi-explicit differential--algebraic equation (DAE) of index~$2$; see~\cite[Ch.~VII.1]{HaiW96}. 

Known deterministic time stepping methods for such DAEs and PDAEs include algebraically stable Runge--Kutta methods~\cite{HaiLR89,KunM06,AltZ18}, splitting methods~\cite{AltO17}, (discontinuous) Galerkin methods~\cite{VouR18,AltH21}, and exponential integrators~\cite{AltZ20}. 
%
In this study, we provide a framework how to randomize existing integration schemes, including the specific construction of four randomized methods. We identify assumptions and introduce a local random field, particularly a Gaussian field, to reflect the uncertainty of the solution in between mesh points. The resulting probabilistic solvers allow for repeated sampling in order to explore the solutions uncertainty. In the following example, we illustrate the difficulty arising for constrained systems and the consequences of perturbations affecting the constraint; see also~\cite{AltLM17}.  
\begin{example}\label{ex:FitzHugh}
To illustrate the randomized approach presented in~\cite{ConGSSZ17}, which was originally applied to the unconstrained FitzHugh–Nagumo model~\cite{RamHCC07}, we extend this example by an additional constraint. With the help of the Lagrangian method, we obtain the nonlinear system 
\begin{align*}
	\dot V(t)
	= 3\, V(t) - V^3(t) + 3\, R(t) - \lambda(t), \qquad
	\dot R(t)
	= \frac{ - 5\, V(t) + 1 - R(t) }{15} - \lambda(t)
\end{align*}
with the constraint $V(t) + R(t) = \sin(t)$ and unknowns $V$, $R$, and $\lambda$. As initial conditions, we set $V(0)=-1$ and $R(0)=1$. When a small perturbation is added to the constraint, implemented as a scalar Gaussian random variable $ \eps \sim \calN(0, \sigma ^2) $, this leads to chaotic behavior. As a result, the trajectories strongly deviate from the true solution.  However, if we add perturbations only in the dynamic part, one can see that every probabilistic solution is a good approximation of the exact solution.  
In Figure~\ref{Fig:pert:Fitz:fs}, we present $50$ trajectories for the $V$ species computed by the implicit Euler scheme with~$\sigma = 0.1$, once affecting the constraint and once only acting on the dynamic part of the solution. We refer the readers to Section~\ref{Subsec:PIME} for more details on the computation of the trajectories. This illustrates the sensitivity of the solution to perturbations on the constraint. On the other hand, Figure~\ref{Fig:pert:Fitz:ds} depicts $50$ trajectories for the $V$ species for different noise scales $\sigma$, showing the sensitivity of the solutions with respect to~$\sigma$. Although we employed the same method, namely the probabilistic implicit Euler method, we varied the noise scale to demonstrate that the scale parameter $\sigma$ controls the apparent uncertainty in the solver. This variation affects the magnitude of the error induced by the probabilistic method. At this point, we would like to emphasize that the scale parameter $\sigma$ should, in general, be chosen problem-dependent on basis of specific characteristics of the model. Such a calibration controls the apparent uncertainty in the solver and is discussed later on.
\end{example}
\begin{figure}
 \centering
    \includegraphics[width=0.5\textwidth]{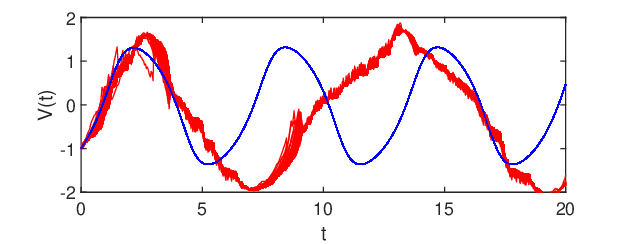}\hfill
        \includegraphics[width=0.5\textwidth]{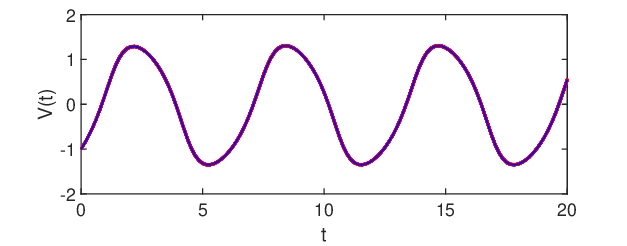}
    \caption{The exact trajectory of $V$ of the constrained FitzHugh–Nagumo model from Example~\ref{ex:FitzHugh} (blue) and $50$ trajectories (red) obtained by the implicit Euler method with Gaussian perturbation on the constraint (left) and on the dynamic part only (right). In both cases the noise scale equals $\sigma = 0.1$. }
    \label{Fig:pert:Fitz:fs} 
\end{figure}
\begin{figure}
\centering    
    \includegraphics[width=0.5\textwidth]{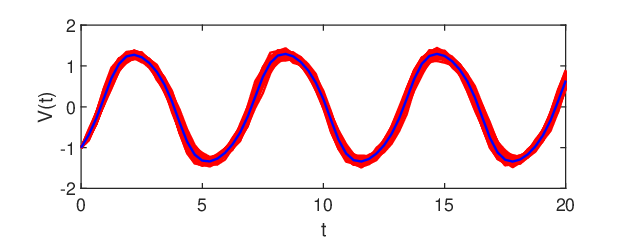}\hfill 
    \includegraphics[width=0.5\textwidth]{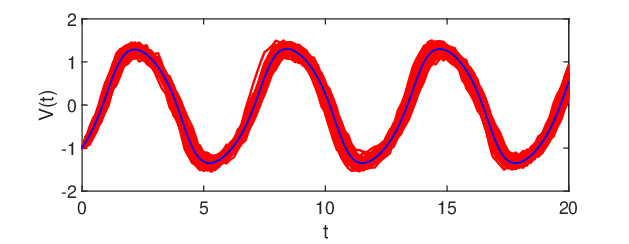}
    \caption{Exact value of~$V$ (blue) and $50$ trajectories (red) in the same setting as in Figure~\ref{Fig:pert:Fitz:fs} with different noise scales, namely $\sigma = 0.5$ (left) and $\sigma=1.5$ (right).}
    \label{Fig:pert:Fitz:ds} 
\end{figure}
In the forthcoming section, we introduce the model of interest and the necessary setup for the construction of probabilistic integrators. The mean square convergence is then analyzed in Section~\ref{sect:convergence}. In Section~\ref{Sec.PNM}, we delve into the construction of four specific integrators, namely the implicit Euler method, the midpoint scheme, and two exponential integrators of first and second order. To confirm the theoretical findings, we provide a numerical example. Finally, Section~\ref{Sec.Calib} is devoted to the calibration of the parameter~$\sigma$ when employing randomized time integrators for constrained parabolic systems. Here, we demonstrate that an appropriate scaling enables the randomized solvers to yield probabilistic solutions without affecting the convergence of the corresponding deterministic scheme.
%
%
\section{Preliminaries}\label{sect:prelim} 
In this section, we present the main assumptions and notations that will be used throughout the remainder of this work. 
%
%
\subsection{Semi-explicit (P)DAEs}\label{sect:prelim:PDAE}
Within this section, we introduce the model of interest, namely (semi-linear) parabolic systems with linear constraints. Note that this is a PDAE, since the corresponding semi-discrete system, which is obtained by the application of a spatial discretization, equals a semi-explicit DAE of index~$2$. More precisely, we consider the following task: seek abstract functions~$u\colon [0,T] \to \V$ and $\lambda\colon [0,T] \to \Q$ such that 
\begin{subequations}
\label{eq:PDAE}
\begin{alignat}{5}
	\dot{u}(t)&\, +\, &\calA u(t)&\, +\, &\calB^\ast \lambda(t)\, 
	&= f(t, u) &&\qquad\text{in } \V^* \label{eq:PDAE:a},\\
	& &\calB u(t)& & 
	&= g(t) &&\qquad\text{in } \Q^* \label{eq:PDAE:b}
\end{alignat}
\end{subequations}
is satisfied for almost every $t\in[0,T]$. Moreover, we assume given initial data~$u(0)=u^0$, which should be {\em consistent} in the sense that~$\calB u^0 = g(0)$. Within this paper, we assume that $\V$ and $\Q$ are given Hilbert spaces. Together with the dual space~$\V^*$, we further assume the existence of a Gelfand triple~$\V, \cH, \V^*$; see~\cite[Ch.~23.4]{Zei90a}. For the involved operators, we mainly follow the assumptions stated in~\cite{AltZ20}. 
\begin{assumption}
\label{assB}
The constraint operator~$\calB\colon \V \to \Q^*$ is linear, continuous, and satisfies an inf--sup condition, i.e., there exists a constant~$\beta>0$ such that 
\[
	\adjustlimits \inf_{q\in\Q\setminus\{ 0 \} }\sup_{v\in\V\setminus\{0\}}
	\frac{\langle \calB v, q\rangle}{\Vert v\Vert_\V \Vert q\Vert_\Q}
	\ge \beta. 
\]
\end{assumption}
\begin{assumption}
\label{assA}
The differential operator~$\calA\colon \V\to\V^*$ is linear, continuous, and elliptic on~$\Vo \coloneqq \ker\calB$, i.e., on the kernel of the constraint operator. Restricted to the kernel, we denote the operator by $\calAo \coloneqq \calA|_{\Vo}\colon \Vo\to\Vo^* \coloneqq (\Vo)^*$. Note that we use $\Vo^*$ as codomain, which means that also the test functions are restricted to $\Vo$. 
\end{assumption}
\begin{remark}
The upcoming results can be extended to the situation where $\calA$ only satisfies a G\aa{}rding inequality on~$\Vo$. In this case, the non-elliptic part can be transferred to the nonlinearity~$f$, cf.~\cite{AltZ20}. 
\end{remark}
\begin{assumption}
\label{assf}
The right-hand sides are assumed to be sufficiently smooth to guarantee the existence of a unique solution. In particular, we assume that $f$ maps into~$\cH$ and that it is Lipschitz continuous in the second component, i.e., there exists a positive constant $L_f$ such that 
\begin{align}
    \label{eq:fLip}
    \big\| f(t,v_1)-f(t,v_2) \big\|_{\cH} 
    \le L_f\, \|v_1-v_2\|_{\cH}.
\end{align}
\end{assumption}
We will also consider corresponding finite-dimensional examples, i.e., semi-explicit DAEs of the form 
\begin{align*}
    \dot u + A u + B^T\lambda 
    &= f(u), \\
    B u \hspace{3.2em}
    &= g.
\end{align*}
Here, the assumptions reduce to $B\in\R^{m,n}$ being of full rank and $A\in\R^{n,n}$ being invertible. 

We would like to mention two examples which fit into the given framework and which will later be considered in the numerical examples.
\begin{example}\label{ex:FitzHugh:revisit}
First, we revisit the constrained FitzHugh--Nagumo model from the introduction. This finite-dimensional example reads 
\begin{align*}
    \begin{bmatrix} \dot V(t) \\ \dot R(t) \end{bmatrix}
    + \begin{bmatrix} -3  & -3 \\ \tfrac13 & \tfrac{1}{15} \end{bmatrix}
    \begin{bmatrix}  V(t) \\ R(t) \end{bmatrix}
    + \begin{bmatrix}  1 \\ 1 \end{bmatrix} \lambda(t) 
    &= \begin{bmatrix} - V(t)^3 \\ \tfrac{1}{15} \end{bmatrix}, \\
    \begin{bmatrix} 1 & 1 \end{bmatrix}
    \begin{bmatrix} V(t) \\ R(t) \end{bmatrix} \hspace{1.88cm}
    &= \sin(t) 
\end{align*}
and is of the considered semi-explicit form. Moreover, the assumed initial conditions~$V(0)=-1$, $R(0)=1$ are consistent with the constraint.  
\end{example}
\begin{example}\label{ex:PDAE}
As a second example, we consider the semi-linear heat equation with a constraint on the integral mean as in~\cite{AltO17}. More precisely, we consider 
\begin{equation}\label{eq:heat:PDAE}
    \dot u - \Delta u + \calB^* \lambda 
    = u^2
    \qquad\text{on }\Omega\coloneqq(0,1)
\end{equation}
with homogeneous Dirichlet boundary conditions for $u$ and the constraint 
\[   
    (\calB u)(t) 
    \coloneqq \int_0^1 u(x,t) \sin(\pi x) \dx = g(t).
\]
Here, the operator~$\calA$ equals the Laplace operator with homogeneous boundary conditions such that Assumption~\ref{assA} is satisfied. Moreover, $\calB$ is inf--sup stable, since it is surjective and maps into a finite-dimensional space.
\end{example}
%
%
\subsection{Decomposition of the solution and flow map}\label{sect:prelim:decomposition}
Within the presented setting, it is possible to decompose the solution $u$ into a dynamic part (which lies in the kernel of~$\calB$) and a complement, which is fully determined by the constraint~\eqref{eq:PDAE:b}. Due to Assumption~\ref{assB}, the constraint operator has a right-inverse~$\calB^-\colon\Q^*\to\V$, leading to  
\[
	\V = \Vo \oplus \Vc \qquad\text{with}\qquad
	\Vo = \ker\calB,\quad 
	\Vc = \image\calB^-.
\]   
Following~\cite{AltZ18}, one may choose $\Vc$, e.g., as the annihilator of $\Vo$. 
%
%
Note that this choice also determines the right-inverse~$\calB^-$. For the resulting decomposition of the solution, we use the notation 
\[
	u = u_{\ker} + u_c, \qquad
	u_{\ker}\colon [0,T] \to \Vo,\ 
	u_c\colon [0,T] \to \Vc.
\]
Inserting this into the constraint~\eqref{eq:PDAE:b}, we get 
\[
	u_c(t) 
	= \calB^- g(t). 
\]
The already mentioned consistency condition of the initial data, $\calB u^0 = g(0)$, determines the value of $u_c(0)$ but not of $u_{\ker}0)$, while $\lambda(0)$ can be computed by~\eqref{eq:PDAE:a}. Hence, we may only prescribe a value for $u_{\ker}(0)$. 
This motivates the definition of a flow map on the kernel, which we denote by~$\psi_t\colon [0,T] \times \Vo \rightarrow \Vo$. For given $t^*\in[0,T]$ with $t^*+t\le T$ and $v_{\ker}\in\Vo$, this map solves the PDAE~\eqref{eq:PDAE} on the time interval $[t^*,t^*+t]$ with initial data $v_{\ker}+\calB^- g(t^*)$. The outcome is then the solution at time $t^*+t$ restricted to the kernel of $\calB$, resulting in $ u_{\ker}(t^{n+1})$, i.e., the solution at time $ t^{n+1} $ within the kernel of $ \calB$,  
\begin{equation}\label{eq:psi}
    u_{\ker}(t^{n+1}) 
    = \psi_\tau(t^n,u_{\ker}(t^n)). 
\end{equation}
%
\subsection{Deterministic and probabilistic setup}\label{sect:prelim:assumptions}
Consider a probability space $(\Omega, \mathcal{F}, \mathbb{P})$ that is rich enough to serve as a shared domain for defining all the random variables and processes being studied and let $\mathbb{E}$ denote the expectation with respect to $\mathbb{P}$. 

By $\Psi_\tau$ we denote a deterministic time integration method which can be used to approximate the solution of the semi-explicit system~\eqref{eq:PDAE} or its semi-discretized counterpart. Based on such a scheme, we define a randomized integrator, resulting in a sequence~$\{U^n\}_{n\in[N]_0}$. Each approximation can again be decomposed into
\[
	U^n = U^n_{\ker} + U^n_c, \qquad
	U^n_{\ker}\in\Vo,\ U^n_{c}\in\Vc,	
\]
with $U^n_c=\calB^-g(t^n)$. As for the continuous solution, we assume that the approximation at time $t^{n+1}$ is fully determined by~$U^n_{\ker}$ and the right-hand sides. Hence, we write 
\begin{equation}\label{eq:Psi}
    U^{n+1}_{\ker} 
    = \Psi_{\tau}(t^n,U^n_{\ker}) + \xi^n(\tau),
\end{equation}
where each $\xi^n$ is assumed to be an i.i.d.~centered Gaussian random variable with values in~$\Vo$. Since the perturbation only acts on the dynamic part of the solution, the constraint $\calB U^{n} = g(t^{n})$ is maintained for all~$n$. Hence, we have $u_c(t^n)=U^n_c$ and the sequence of errors $e^n$ of the random approximation is defined by 
\[
    e^{n}
    \coloneqq u(t^{n}) - U^{n}
    = u_{\ker}(t^{n}) - U_{\ker}^{n},\qquad n\in[N]_0.
\]
Inserting the definition of 
the numerical scheme~\eqref{eq:Psi}, we obtain
\begin{equation}\label{eq:err}
    e^{n+1}
    = u_{\ker}(t^{n+1})-\Psi_{\tau}(t^n,U_{\ker}^n)-\xi^n(\tau),\qquad n\in[N-1]_0.
\end{equation}
Similarly to \cite{ConGSSZ17}, the assumptions on the deterministic scheme $\Psi_\tau$ are summarized in the following. 
\begin{assumption}[Deterministic scheme]\label{ass:Psi}
Assume that the time integration scheme~$\Psi_\tau\colon [0,T]\times \Vo\rightarrow\Vo$ meets the following conditions: 
    \begin{itemize}
        \item[1.] Let $u^{n}_{\ker}$ denote an approximation of $u_{\ker}(t^{n})$ obtained by~$\Psi_\tau$. Then, there exist constants $\tau^*, C, q > 0$ such that for all step sizes~$\tau\in[0,\tau^*]$ and $t^n\in[0,T]$ it holds that
         \begin{equation}\label{eq:errDet}             
             \big\|  u_{\ker}(t^{n})-u^{n}_{\ker}\big\|_\cH 
             \leq C\, \tau^{q}.
         \end{equation}
     \item[2.] There exist constants $\tau^*, L_{\Psi} > 0$ such that for all step sizes~$\tau\in[0,\tau^*]$, $t\in[0,T]$, and $v_1, v_2\in\Vo$ we have the Lipschitz property 
    \begin{equation}\label{eq:lipPsi}
         \big\|\Psi_{\tau}(t,v_1)-\Psi_{\tau}(t,v_2)\big\|_{\cH}
         \leq (1+L_{\Psi}\tau)\, \|v_1-v_2\|_{\cH}.
     \end{equation}
     \end{itemize}
\end{assumption}
Condition~\eqref{eq:errDet} corresponds to the convergence order and is essential for the consideration of any deterministic time integration method $\Psi_\tau$. The second part of Assumption~\ref{ass:Psi} describes a Lipschitz property of the approximate flow map $\Psi_{\tau}$ with respect to its second argument. Next, we state the required assumptions on the random noise, which appears in~\eqref{eq:Psi}. 
\begin{assumption}[Random noise]\label{ass:noise}
Let $ \chi^n(s) $ be a centered Gaussian stochastic process taking values in $ \Vo $, where $ \chi^n(s) \sim \mathcal{N}(0, \sigma^2) $ for all $ s \in [0, t] $ with $\sigma\in\R$.  
The perturbation is given by $ \xi^n(t) = \int_0^t \chi^n(s) \ds$ with $\chi^n$ being i.i.d.~over different time steps and admits parameters $C_{\xi}, p>0$ such that for all $t\in[0,\tau]$ it holds that $\mathbb{E}\, \|\xi^n(t)\|_{\cH}^2 \le C_{\xi}\, t^{2p+1}$.
\end{assumption}
In line with the work~\cite{ConGSSZ17}, within the upcoming convergence analysis, we will observe that choosing $p=q$ preserves the strong order of accuracy of the underlying deterministic integrator. This choice introduces the maximum amount of noise consistent with the accuracy of the original deterministic integrator. 
%
%
\section{Mean Square Convergence Analysis}\label{sect:convergence}
In this section, we analyze the mean square convergence of probabilistic time integration methods for semi-explicit PDAEs of the form~\eqref{eq:PDAE} as introduced in the previous section. We will show that there exists a constant $C > 0$ independent of $\tau$ such that 
\[
    \mathbb{E}\, \|u(t^n)-U^n\|_{\cH}^2 
    \leq C\, \tau^{2r}
\]
for all $n\in[N]$. Therein, $r$ is the {\em mean square order of convergence} of the method. For the unconstrained case, cf.~\cite{ConGSSZ17}, it is known that~$r$ is equal to the minimum of $q$ (convergence order of the deterministic time integrator) and $p$ (order of the random noise). 
Given that our approach is based on a construction in the kernel of the constraint, one can hope to achieve the same order for the given setting. The analysis of this section will make use of the following inequality. 
\begin{lemma}[Discrete Gronwall lemma {\cite[App.~C]{lie22}}]\label{lem:Gronwall}
Suppose that for non-negative sequences $\{x^{{n}}\}_{{n}\in[N]_0}$ and $\{y^{{n}}\}_{{n}\in [N]_0}$ and for some constant $L > 0$, we have $x^{{n}+1} \leq y^{{n}}+(1+\tau L)\, x^{{n}}$ for all $n\in[N-1]_0$. Then, 
\[
    x^{n}
    \leq \bigg(x^0+\sum_{j=0}^{n-1}\, y^j\bigg) \exp\big( L \tau n \big),\qquad {n}\in[N]_0.
\]
\end{lemma}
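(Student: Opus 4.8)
The plan is to prove the discrete Gronwall lemma by induction on $n$. This is a standard result, so the approach is straightforward, and the main content is bookkeeping with the exponential bound. Let me sketch the argument.

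\medskip

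\textbf{Proof strategy.} First I would establish the base case $n=0$, where the claimed inequality reads $x^0 \le x^0 \exp(0) = x^0$, which holds trivially. For the inductive step, assume the bound holds at level $n$, namely
\[
    x^{n} \leq \bigg(x^0+\sum_{j=0}^{n-1} y^j\bigg) \exp\big( L \tau n \big).
\]
Applying the hypothesis $x^{n+1} \leq y^{n}+(1+\tau L)\, x^{n}$ together with the induction assumption, and using the elementary inequality $1 + \tau L \le \exp(L\tau)$ (valid for all $\tau L \ge 0$), I would estimate
\[
    x^{n+1}
    \leq y^{n} + \exp(L\tau)\bigg(x^0+\sum_{j=0}^{n-1} y^j\bigg)\exp(L\tau n)
    = y^{n} + \bigg(x^0+\sum_{j=0}^{n-1} y^j\bigg)\exp\big(L\tau(n+1)\big).
\]
Since all the $y^j$ are non-negative and $\exp(L\tau(n+1)) \ge 1$, the stray term $y^n$ can be absorbed into the sum by bounding $y^n \le y^n \exp(L\tau(n+1))$, which yields
\[
    x^{n+1}
    \leq \bigg(x^0+\sum_{j=0}^{n} y^j\bigg)\exp\big(L\tau(n+1)\big),
\]
completing the induction.

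\medskip

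\textbf{Main obstacle.} There is no serious difficulty here; the only point requiring care is the direction of the two elementary estimates. The step $1+\tau L \le \exp(L\tau)$ must be applied to the multiplicative factor, while the non-negativity of $y^n$ is what permits enlarging its coefficient from $1$ to $\exp(L\tau(n+1))$ so that it merges cleanly into the telescoped sum. Both rely crucially on the hypothesis that the sequences are non-negative and that $L,\tau \ge 0$. I would make sure to state explicitly where non-negativity is used, since that is the assumption most easily overlooked. Beyond this, the result is a textbook induction and requires no clever construction.
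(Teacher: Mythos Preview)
Your induction argument is correct and is the standard proof of this discrete Gronwall inequality. The paper itself does not give a proof of this lemma; it simply cites it from \cite[App.~C]{lie22}, so there is no paper-side argument to compare against.
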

In the spirit of~\cite[Th.~2.2]{ConGSSZ17}, we can now state the global mean square convergence result for probabilistic time integrators applied to~\eqref{eq:PDAE}. %
\begin{theorem}[Global mean square convergence]\label{thm:meanErr} 
Let Assumptions~\ref{assB}, \ref{assA}, and~\ref{assf} be given.
Consider $\tau\le1$ and let Assumptions~\ref{ass:Psi} and \ref{ass:noise} hold with orders $q, p > 0$, respectively. Then there exists a constant~$C>0$ independent of the step size~$\tau$ such that  
\begin{equation}\label{ineq:global:Err}
     \sup_{n\in[N]} \big(\, \mathbb{E}\,\|u(t^n)-U^{n}\|^2_{\cH}\, \big)^{1/2} 
     \leq C\, \tau^{\min\{p,q\}}.
\end{equation}
\end{theorem}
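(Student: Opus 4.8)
The plan is to compare the randomized iterates with the purely deterministic ones and to exploit that the noise is centered, independent across steps, and confined to the kernel $\Vo$. Let $u^n_{\ker}$ denote the deterministic approximation produced by $\Psi_\tau$ from the exact initial value, and introduce the noise-propagation error $\epsilon^n \coloneqq u^n_{\ker} - U^n_{\ker}$, so that $e^n = \big(u_{\ker}(t^n) - u^n_{\ker}\big) + \epsilon^n$. The first summand is a deterministic quantity bounded by $C\tau^q$ via part~1 of Assumption~\ref{ass:Psi}, and since both the deterministic scheme and the randomized scheme start from the same consistent data we have $\epsilon^0 = 0$. Because the perturbation acts only on the dynamic component, the constraint is satisfied exactly at every step, so that the whole analysis may be carried out in $\cH$ using the kernel-valued quantities.

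First I would derive the one-step recursion for $\epsilon^n$. Subtracting the randomized update~\eqref{eq:Psi} from the deterministic one gives $\epsilon^{n+1} = \big(\Psi_{\tau}(t^n,u^n_{\ker}) - \Psi_{\tau}(t^n,U^n_{\ker})\big) - \xi^n(\tau)$. Squaring the $\cH$-norm and taking expectations, I would expand into the three terms $\mathbb{E}\|\Psi_{\tau}(t^n,u^n_{\ker}) - \Psi_{\tau}(t^n,U^n_{\ker})\|_{\cH}^2$, a cross term, and $\mathbb{E}\|\xi^n(\tau)\|_{\cH}^2$. The crucial observation is that the first difference is a function of $\xi^0,\dots,\xi^{n-1}$ only, hence independent of $\xi^n$, which is centered; consequently the cross term vanishes. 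The remaining two terms are controlled by the Lipschitz bound~\eqref{eq:lipPsi} and by Assumption~\ref{ass:noise} with $t=\tau$, yielding $\mathbb{E}\|\epsilon^{n+1}\|_{\cH}^2 \le (1+L_{\Psi}\tau)^2\,\mathbb{E}\|\epsilon^n\|_{\cH}^2 + C_{\xi}\,\tau^{2p+1}$.

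Next I would linearize the factor, writing $(1+L_{\Psi}\tau)^2 \le 1 + \tilde{L}\tau$ for a suitable $\tilde{L}$ (using $\tau\le1$), and apply the discrete Gronwall inequality of Lemma~\ref{lem:Gronwall} with $x^n = \mathbb{E}\|\epsilon^n\|_{\cH}^2$, $y^n = C_{\xi}\tau^{2p+1}$, and $L=\tilde{L}$. Since $\epsilon^0 = 0$ and $n\tau \le T$, the accumulated noise $\sum_{j=0}^{n-1} C_{\xi}\tau^{2p+1} = n\,C_{\xi}\,\tau^{2p+1} \le C_{\xi}T\,\tau^{2p}$, while the exponential factor is bounded by $\exp(\tilde{L}T)$. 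This delivers $\mathbb{E}\|\epsilon^n\|_{\cH}^2 \le C_1\,\tau^{2p}$ uniformly in $n$. Finally, combining the deterministic and the stochastic contributions through Minkowski's inequality in $L^2(\Omega,\mathbb{P})$ gives $\big(\mathbb{E}\|e^n\|_{\cH}^2\big)^{1/2} \le C\tau^q + \sqrt{C_1}\,\tau^p \le C\,\tau^{\min\{p,q\}}$, and taking the supremum over $n$ proves~\eqref{ineq:global:Err}.

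The main obstacle --- and the step that actually decides the claimed order --- is the vanishing of the cross term: a naive triangle inequality at the level of squared norms would turn the accumulated noise contributions of size $\tau^{2p+1}$ into a loss of one power of $\tau$ too many and would degrade the rate. Controlling the accumulation correctly requires genuinely using independence and centering of the $\xi^n$, so that variances (not norms) add, and then converting the resulting factor $n\,\tau^{2p+1}$ into $\tau^{2p}$ via $n\tau\le T$. The only point specific to the constrained setting that must be checked carefully is that restricting the perturbation to $\Vo$ keeps the iterates admissible and lets the entire argument run in the Hilbert space $\cH$ on kernel-valued errors; once this is in place, the estimate is structurally identical to the unconstrained case of~\cite{ConGSSZ17}.
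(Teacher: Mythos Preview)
Your proposal is correct and follows essentially the same route as the paper: the quantity you call $\epsilon^n$ is precisely the paper's $e^n-\tilde e^{\,n}$, and the recursion, the vanishing of the cross term by independence and centering, the Gronwall step, and the final combination with the deterministic error~\eqref{eq:errDet} all match. If anything, your justification for the cross term (measurability with respect to $\xi^0,\dots,\xi^{n-1}$ plus independence of the centered $\xi^n$) is a bit more explicit than the paper's.
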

\begin{proof}
Recall that $U^{n}_{\ker}$ denotes the approximate solution given by the probabilistic solver and let~$u_{\ker}^{n}$ denote the corresponding solution given by the underlying deterministic time stepping scheme. For the latter, we define the error 
\[
    \tilde{e}^{n+1}
    \coloneqq u(t^{n+1})-u^{n+1} 
    = u_{\ker}(t^{n+1}) - u_{\ker}^{n+1}
    = u_{\ker}(t^{n+1})-\Psi_\tau(t^n,u_{\ker}^n). 
\]
Considering the definition of the error $e^{n+1}$ in~\eqref{eq:err} and leveraging the decomposition of the solution $u=u_{\ker}+u_c$, we derive
\begin{align*}
    \|e^{n+1}-\tilde{e}^{n+1}\|^2_\cH 
    &= \big\| \Psi_{\tau}(t^n,u^n_{\ker}) - \Psi_{\tau}(t^n,U^n_{\ker}) - \xi^n(\tau) \big\|^2_\cH \notag \\
    &= \big\|\Psi_{\tau}(t^n,u^n_{\ker}) - \Psi_{\tau}(t^n,U^n_{\ker}) \big\|^2_{\cH} + \|\xi^n(\tau)\|^2_\cH \nonumber\\
    &\qquad\qquad -2\, \big( \Psi_{\tau}(t^n,u^n_{\ker}) - \Psi_{\tau}(t^n,U^n_{\ker}),\xi^n(\tau) \big)_{\cH}\nonumber\\
    &\leq ( 1+L_{\Psi}\tau)^2 \|e^n-\tilde{e}^n\|^2_{\cH}+ \|\xi^n(\tau)\|^2_\cH \nonumber\\
    &\qquad\qquad -2\, \big( \Psi_{\tau}(t^n,u^n_{\ker}) - \Psi_{\tau}(t^n,U^n_{\ker}),\xi^n(\tau) \big)_{\cH},
\end{align*}
where the second part of Assumption~\ref{ass:Psi} was applied in the last step. Now we consider the expectation of this estimate. Since $\xi^n$ equals a zero-mean Gaussian random variable, the expectation of the inner product vanishes and we get together with Assumption~\ref{ass:noise}
\begin{align*}
    \mathbb{E}\, \|e^{n+1}-\tilde{e}^{n+1}\|^2_{\cH}
    \le ( 1+\tilde{L}_{\Psi}\tau )\, \mathbb{E}\, \|e^n-\tilde{e}^n\|^2_{\cH} + C_{\xi}\, \tau^{2p+1},
\end{align*}
where $\tilde{L}_{\Psi}>0$ is chosen such that $(1+L_{\Psi}\tau)^2\leq 1+\tilde{L}_{\Psi}\tau$. Due to $e^0 = \tilde{e}^{0}=0$, an application of the discrete Gronwall lemma  yields 
\begin{align*}
    \mathbb{E}\, \|e^{n}-\tilde{e}^{n}\|^2_{\cH}
    \le TC_{\xi}\,\tau^{2p}\exp\big( \tilde{L}_{\Psi} T \big).
\end{align*}
Finally, an application of the triangle inequality, 
\begin{align*}
    \mathbb{E}\,\| e^{n} \|^2_{\cH}
    &\le 2\, \mathbb{E}\,\|\tilde{e}^{n}\|^2_{\cH} 
    + 2\,\mathbb{E}\,\|e^{n}-\tilde{e}^{n}\|^2_{\cH}, 
\end{align*}
and the assumption on the global error of the deterministic solver given in~\eqref{eq:errDet} yields the desired result. 
\end{proof} 
\begin{remark}
Analogously to \cite{ConGSSZ17}, the convergence result of Theorem~\ref{thm:meanErr} implies that a reasonable option for the noise scale $p$ is to set $p = q$, where $q$ is the order of the underlying deterministic method. This choice of $p$ preserves the convergence of the underlying deterministic method, while providing a probabilistic interpretation of the numerical solution.
\end{remark}
%
%
\section{Examples of Probabilistic Time Integrators}\label{Sec.PNM}
This section is dedicated to exploring randomized time stepping schemes of first and second order for constrained parabolic PDAEs of the form~\eqref{eq:PDAE}. For this, we consider four methods in detail and demonstrate how perturbations can be introduced without affecting the constraints of the underlying model. Furthermore, we show that the randomized numerical solution exhibits convergence properties that are asymptotically no inferior to those of the deterministic numerical solution. This implies that the trajectories obtained from the randomized integrator are all equally valid approximations. 
%
\subsection{Probabilistic implicit Euler scheme}\label{Subsec:PIME}
As a first example, we consider the implicit Euler scheme applied to~\eqref{eq:PDAE}. In the deterministic setting, this reads 
\begin{subequations}
\label{eq:implEuler}	
\begin{align}
	(\id + \tau \calA)\, u^{n+1} - u^n + \tau\calB^*\lambda^{n+1} 
	&= \tau f(t^{n+1},u^{n+1}) \qquad\text{in } \V^*, \label{eq:implEuler:a} \\
	\calB u^{n+1} \hspace{2.83cm}
	&= \phantom{\tau}g(t^{n+1}) \hspace{1.0cm}\qquad\text{in } \Q^*. \label{eq:implEuler:b}
\end{align}
\end{subequations}
Before incorporating perturbations, we discuss Assumption~\ref{ass:Psi}. It is well-known that the implicit Euler scheme converges with order one without constraints~\cite{LubO93}. This then directly translates to the solution part in the kernel of the constraint, i.e., we get~\eqref{eq:errDet} with $q=1$. 
To prove the Lipschitz property~\eqref{eq:lipPsi}, we use once more the decomposition of the iterates~$u^n = u^n_{\ker} + u^n_{c}$ with $u^{n}_{c} = \calB^-g(t^n)$ as described in Section~\ref{sect:prelim:decomposition}. With this, equation~\eqref{eq:implEuler:a} reads 
\begin{align*}
    u^{n+1}_{\ker}+ \calB^- g(t^{n+1}) 
	&= u^n_{\ker} +\calB^- g(t^{n}) - \tau\, \calA\, \big(u^{n+1}_{\ker} +\calB^- g(t^{n+1}) \big) \\
    &\hspace{2.1cm} + \tau\calB^*\lambda^{n+1} + \tau\, f(t^{n+1},u_{\ker}^{n+1}+\calB^-g(t^{n+1})) \quad\text{in } \V^*.
\end{align*}
Restricting this equation to test functions in $\Vo$, the Lagrange multiplier disappears. Moreover, setting $\Vc$ to the annihilator of $\Vo$, also $\calA \calB^- g(t^{n+1})$ vanishes and we get 
\[
    u^{n+1}_{\ker}  
	= u^n_{\ker} + \tau\, \big(- \calA\, u^{n+1}_{\ker} +f(t^{n+1},u_{\ker}^{n+1}+\calB^-g(t^{n+1}))\big) - \calB^-\big( g(t^{n+1}) - g(t^n) \big) 
\]
in $\Vo^*$ and, hence, 
\[
    \Psi_{\tau}(t,v)
    = v + \tau\, \Big[ -\calA\Psi_{\tau}(t,v) + f\big(t+\tau,\Psi_{\tau}(t,v)+\calB^-g(t+\tau)\big) \Big] 
    - \calB^-\big( g(t+\tau) - g(t) \big).
\]
\begin{lemma}
\label{lem:LipschitzEuler}
Let Assumptions~\ref{assB}, \ref{assA}, and~\ref{assf} be given. Then the implicit Euler scheme satisfies the Lipschitz property~\eqref{eq:lipPsi}. 
\end{lemma}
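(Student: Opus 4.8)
The plan is to work directly with the fixed-point characterization of $\Psi_\tau$ derived immediately above the lemma. Writing $w_i \coloneqq \Psi_\tau(t,v_i)\in\Vo$ for $i\in\{1,2\}$ and subtracting the two defining identities, the contributions $\calB^- g(t+\tau)$ and $\calB^-\big(g(t+\tau)-g(t)\big)$ — which do not depend on the argument — cancel, leaving the linear-implicit relation
\[
    (\id + \tau\calAo)(w_1-w_2)
    = (v_1-v_2) + \tau\big[ f(t+\tau, w_1+\calB^- g(t+\tau)) - f(t+\tau, w_2+\calB^- g(t+\tau)) \big]
\]
in $\Vo^*$. I would abbreviate $\delta \coloneqq w_1-w_2$ and $d\coloneqq v_1-v_2$, both lying in $\Vo$, and note that each implicit step is well posed, so that $w_i$ exists in $\Vo$; this follows from Lax--Milgram applied to $\id+\tau\calAo$ together with the Lipschitz bound on $f$ for $\tau$ small, and is granted by Assumptions~\ref{assB}--\ref{assf}.

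Next I would test this identity against $\delta\in\Vo$, i.e.\ apply the duality pairing $\langle\,\cdot\,,\delta\rangle$. By the Gelfand triple structure the identity term yields $\langle\id\,\delta,\delta\rangle=\|\delta\|_{\cH}^2$, while ellipticity of $\calAo$ on $\Vo$ from Assumption~\ref{assA} gives $\langle\calAo\delta,\delta\rangle\ge 0$, so this contribution can simply be dropped. On the right-hand side, Cauchy--Schwarz bounds $(d,\delta)_{\cH}$ by $\|d\|_{\cH}\|\delta\|_{\cH}$; and since the complementary part $\calB^- g(t+\tau)$ is identical in both arguments of $f$, the Lipschitz estimate~\eqref{eq:fLip} gives $\|f(t+\tau,w_1+\calB^- g(t+\tau))-f(t+\tau,w_2+\calB^- g(t+\tau))\|_{\cH}\le L_f\|\delta\|_{\cH}$. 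Collecting these bounds leads to
\[
    \|\delta\|_{\cH}^2 \le \|d\|_{\cH}\|\delta\|_{\cH} + \tau L_f\|\delta\|_{\cH}^2,
\]
hence $(1-\tau L_f)\,\|\delta\|_{\cH}\le\|d\|_{\cH}$ (the case $\delta=0$ being trivial).

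Finally, restricting to step sizes $\tau\le\tau^*$ with $\tau^*\le 1/(2L_f)$, I would invoke the elementary inequality $\tfrac{1}{1-x}\le 1+2x$ on $[0,\tfrac12]$ to obtain $\|\delta\|_{\cH}\le(1+2L_f\tau)\,\|d\|_{\cH}$, which is exactly the claimed property~\eqref{eq:lipPsi} with $L_{\Psi}\coloneqq 2L_f$.

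The main obstacle I anticipate is the implicitness of the scheme: both the diffusion term $\tau\calAo\delta$ and the reaction term $\tau f(\cdot,w_i+\cdots)$ involve the \emph{output} $w_i$ rather than the input $v_i$. The decisive idea that closes the estimate is to test against the difference $\delta$ itself, which converts the unbounded but elliptic operator $\calAo$ into a nonnegative, hence discardable, quadratic form, while the Lipschitz term becomes quadratic in $\|\delta\|_{\cH}$ and can therefore be absorbed into the left-hand side for $\tau$ small. Here it is essential that the perturbation and constraint data enter $f$ identically in both arguments, so that only the kernel difference $\delta$ survives in the Lipschitz bound.
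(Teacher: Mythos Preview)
Your proof is correct and follows essentially the same route as the paper: subtract the two implicit Euler identities, test against the difference $\delta=\Psi_\tau(t,v_1)-\Psi_\tau(t,v_2)\in\Vo$, discard the elliptic term $\langle\calAo\delta,\delta\rangle\ge 0$, and absorb the Lipschitz contribution of $f$ for small $\tau$. The only cosmetic difference is that the paper uses the polarization inequality $\|w_1\|_{\cH}^2-\|w_0\|_{\cH}^2\le 2(w_1-w_0,w_1)_{\cH}$ in place of your Cauchy--Schwarz step, which leads to $(1-2L_f\tau)^{-1/2}$ rather than your $(1-L_f\tau)^{-1}$ and hence a marginally larger $L_\Psi$.
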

\begin{proof}
Define
\[
    w_1\coloneqq\Psi_{\tau}(t,v_1)-\Psi_{\tau}(t,v_2)\in\Vo,\qquad 
    w_0\coloneqq v_1-v_2\in\Vo.
\]
Rearranging $0\leq \|w_0-w_1\|^2_{\cH}$ leads to 
\begin{align*}
    \tfrac1{2\tau}\, \big( &\|w_1\|^2_{\cH}-\|w_0\|^2_{\cH} \big) \\[2mm]
    &\qquad\leq \tfrac1\tau\, \big( w_1-w_0, w_1\big)_{\cH}\\[2mm]
    &\qquad\leq -\, \big\langle \calA w_1,w_1\big\rangle_{\V^*\times\V} \\[2mm]
    &\qquad\qquad+\left(f(t,\Psi_{\tau}(t,v_1)+\calB^-g(t+\tau))-f(t,\Psi_{\tau}(t,v_2)+\calB^-g(t+\tau)),w_1\right)_{\cH}\\[2mm]
    &\qquad\leq L_f\, \|w_1\|^2_{\cH}.
\end{align*}
The last estimate follows from the ellipticity of $\calA$ on $\calV_{\ker}$ and~\eqref{eq:fLip}. We then obtain
\begin{align*}
    \|\Psi_{\tau}(t,v_1) - \Psi_{\tau}(t,v_2)\|_{\cH}
    \leq (1-2L_f\tau)^{-1/2}\|v_1-v_2\|_{\cH},
\end{align*}
which holds for all $0<\tau\leq\tau^*<\tfrac{1}{2L_f}$. Setting $L_{\Psi}\coloneqq\tfrac{2L_f}{1-2L_f\tau^*}$, we have 
\[
    (1-2L_f\tau)^{-1/2}
    \leq (1-2L_f\tau)^{-1}
    \leq 1+L_{\Psi}\tau
\]
for all $0<\tau\leq \tau^*$, which leads to the desired result. 
\end{proof}
We now turn to the incorporation of perturbations. Here, the precise location of the perturbation is essential in order to ensure that they do not interact with the constraint. Adding the perturbations to the dynamic part, computing one Euler step involves the solution of the saddle point problem 
\begin{equation}\label{eq:ConPDE}
	\begin{bmatrix} \id + \tau \calA & \tau \calB^* \\ \calB & 0\end{bmatrix}
	\begin{bmatrix} U^{n+1} \\ \lambda^{n+1} \end{bmatrix}
	= 
	\begin{bmatrix} U^n + \tau f(t^{n+1},U^{n+1}) + \xi^n(\tau)\\ g(t^{n+1}) \end{bmatrix},    
\end{equation}
where $\xi^n(\tau)\in \Vo$ can be defined as $\sigma\tau^{3/2}\tilde{\xi}^n$ by the natural choice of $p=q=1$. Here, $\sigma$ is a constant noise scale and $\tilde{\xi}^n$ is defined as a random vector where each component is an i.i.d. standard normal random variable, i.e., $\tilde{\xi}_i^n \sim \mathcal{N}(0, 1)$, with $i$ indexing  the components of the vector. The existence of a unique solution to~\eqref{eq:ConPDE} is guaranteed by \cite[Lem.~3.1]{AltZ20}. In Section~\ref{Sec.Calib}, we discuss the case where $\sigma$ is chosen problem-dependent such that the expected order of convergence is maintained. 
\begin{remark}
It is assumed that $\xi^n(\tau) \in \Vo$. Here, however, also~$\xi^n(\tau) \in \V$ is possible, since the difference would only affect the Lagrange multiplier~$\lambda$ and not the constraint.
\end{remark}
\begin{remark}
Note that the perturbation is applied in a way that maintains the structure of the problem, ensuring that the conditions for solvability, such as the ellipticity of $\calA$ on the kernel of $\calB$ and the inf--sup condition for $\calB$, remain valid. This guarantees that adding perturbations does not affect the existence or uniqueness of the solution.
\end{remark}
\begin{remark}
With an implicit time discretization, one needs to solve a possibly nonlinear system in each time step. This is due to the semi-linear term $f(t^{n+1},U^{n+1})$. This computational burden can be addressed by employing a semi-explicit time stepping method, which implements $f(t^n, U^n)$ instead. 
\end{remark}
%
\subsection{Probabilistic midpoint scheme}\label{subs:Midpoint}
The deterministic midpoint scheme applied to the PDAE~\eqref{eq:PDAE} results in 
\begin{subequations}
\label{eq:Midpoint}	
\begin{align}
	\big(\id + \tfrac{\tau}{2} \calA\big)\, u^{n+1} - \big(\id - \tfrac{\tau}{2} \calA\big)\, u^n + \tau\calB^*\lambda^{n+1} 
	&= \tau f^{n+\frac{1}{2}} \qquad\text{in } \V^*, \label{eq:Midpoint:a} \\
	\calB u^{n+1} \hspace{2.83cm}
	&= \phantom{\tau}g(t^{n+1})\hspace{0.87cm}\text{in } \Q^*,
 \label{eq:Midpoint:b}
\end{align}
\end{subequations}
where $f^{n+\frac{1}{2}}=f\left(t^n+\frac{\tau}{2},\frac{1}{2}(u^n+u^{n+1})\right)$. The result is a second-order scheme, i.e., we get~\eqref{eq:errDet} with $q=2$. Using again the fact that $u^{n+1}_c = \calB^-g(t^{n+1})$ and restricting equation~\eqref{eq:Midpoint:a} to test functions in $\Vo$, we obtain 
\[
	\big(\id + \tfrac{\tau}{2} \calA\big)\, u_{\ker}^{n+1}  
	= \big(\id - \tfrac{\tau}{2} \calA\big)\, u^n_{\ker} + \tau f\big(t^n+\tfrac{\tau}{2},\tfrac{1}{2}(u^n+u^{n+1})\big) - \calB^-\big( g(t^{n+1}) - g(t^n)\big)
	\quad\text{in } \Vo^* 
\]
and, therefore,
\[
    \Psi_{\tau}(t,v) 
    = \big(\id-\tfrac{\tau}{2}\calA\big)\, v -\tfrac{\tau}{2} \calA\Psi_{\tau}(t,v) + \tau f\big(t+\tfrac{\tau}{2},\widetilde v(t)\big) - \calB^-\big(g(t+\tau)-g(t)\big)
\]
with $\widetilde v(t)\coloneqq\frac{1}{2}\big(v+\calB^-g(t)+\Psi_{\tau}(t,v)+\calB^-g(t+\tau)\big)$. 
\begin{lemma}
\label{lem:LipschitzMidpoint}
Let Assumptions~\ref{assB}, \ref{assA}, and~\ref{assf} be given. Then the midpoint scheme satisfies the Lipschitz property~\eqref{eq:lipPsi}. 
\end{lemma}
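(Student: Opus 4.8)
The plan is to mirror the strategy of the proof of Lemma~\ref{lem:LipschitzEuler}, but with the crucial modification of testing the difference equation against the \emph{midpoint} rather than against the new iterate. First I would set, exactly as before,
\[
    w_1 \coloneqq \Psi_{\tau}(t,v_1)-\Psi_{\tau}(t,v_2)\in\Vo, \qquad
    w_0 \coloneqq v_1-v_2\in\Vo,
\]
and subtract the two instances of the midpoint relation for $\Psi_\tau$. Since the terms $\calB^-\big(g(t+\tau)-g(t)\big)$ are independent of the second argument, they cancel, and using that the arguments of the nonlinearity differ by exactly $\widetilde v_1(t)-\widetilde v_2(t)=\tfrac12(w_0+w_1)$, one arrives at
\[
    w_1 - w_0 = -\tfrac{\tau}{2}\,\calA(w_0+w_1) + \tau\big( f(t+\tfrac\tau2,\widetilde v_1(t))-f(t+\tfrac\tau2,\widetilde v_2(t)) \big).
\]

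The key step, and the main point where the argument departs from the implicit Euler case, is to take the $\cH$-inner product of this identity with the midpoint $m \coloneqq \tfrac12(w_0+w_1)$ instead of with $w_1$. This choice is dictated by the symmetry of the scheme and yields the clean algebraic identity $(w_1-w_0,m)_{\cH}=\tfrac12\big(\|w_1\|_{\cH}^2-\|w_0\|_{\cH}^2\big)$ on the left-hand side. On the right-hand side, the diffusion contribution becomes $-\tau\,\langle\calAo m,m\rangle_{\Vo^*\times\Vo}$, which is non-positive by the ellipticity of $\calAo$ on $\Vo$, and the nonlinear contribution is bounded by $\tau L_f\,\|\widetilde v_1-\widetilde v_2\|_{\cH}\,\|m\|_{\cH}=\tau L_f\,\|m\|_{\cH}^2$ via Cauchy--Schwarz and~\eqref{eq:fLip}, again exploiting that $\widetilde v_1-\widetilde v_2=m$.

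Combining these two estimates gives $\|w_1\|_{\cH}^2-\|w_0\|_{\cH}^2\le 2\tau L_f\,\|m\|_{\cH}^2$, and I would then use convexity of the squared norm, $\|m\|_{\cH}^2\le\tfrac12\big(\|w_0\|_{\cH}^2+\|w_1\|_{\cH}^2\big)$, to obtain a closed recursion in the norms,
\[
    (1-\tau L_f)\,\|w_1\|_{\cH}^2 \le (1+\tau L_f)\,\|w_0\|_{\cH}^2.
\]
For $0<\tau\le\tau^*<1/L_f$ this is equivalent to $\|w_1\|_{\cH}\le\big((1+\tau L_f)/(1-\tau L_f)\big)^{1/2}\,\|w_0\|_{\cH}$, and the final step is the elementary estimate $\big((1+\tau L_f)/(1-\tau L_f)\big)^{1/2}\le 1+L_{\Psi}\tau$, valid with, e.g., $L_{\Psi}\coloneqq L_f/(1-\tau^* L_f)$, which recovers~\eqref{eq:lipPsi}. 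I expect the only genuine subtlety to be the identification of the correct test function $m=\tfrac12(w_0+w_1)$; once that is fixed, the remaining estimates are routine and structurally identical to those in Lemma~\ref{lem:LipschitzEuler}.
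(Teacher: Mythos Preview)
Your proposal is correct and essentially identical to the paper's proof: the paper also tests the difference equation against $w_0+w_1$ (your $2m$), uses ellipticity of $\calAo$ to drop the diffusion term, the Lipschitz bound on $f$ together with $\|w_0+w_1\|_{\cH}^2\le 2(\|w_0\|_{\cH}^2+\|w_1\|_{\cH}^2)$ for the nonlinearity, and arrives at the same inequality $(1-\tau L_f)\|w_1\|_{\cH}^2\le(1+\tau L_f)\|w_0\|_{\cH}^2$. The only difference is cosmetic---the paper chooses the slightly cruder constant $L_\Psi=2L_f/(1-L_f\tau^*)$, whereas your sharper $L_\Psi=L_f/(1-L_f\tau^*)$ also works.
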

\begin{proof}
Given $v_1,v_2\in\Vo$, we define $w_0$ and $w_1$ as in the proof of Lemma~\ref{lem:LipschitzEuler}. It directly follows that $\widetilde v_1 - \widetilde v_2 = \frac12 (w_0 + w_1)$. This, in turn, leads to  
\begin{align*}
    \tfrac1{\tau}\, \big(\|w_1\|^2_{\cH}-\|w_0\|^2_{\cH}\big)
    &= \tfrac1{\tau}\, \big(w_1-w_0,w_0+w_1\big)_{\cH}\\[2mm]
    &\leq -\tfrac12\, \big\langle\calA (w_0+w_1),w_0+w_1\big\rangle_{\V^*\times\V}\\[2mm]
    &\qquad\qquad+ \Big( f\big(t+\tfrac\tau2,\widetilde v_1\big)-f\big(t+\tfrac\tau2,\widetilde v_2\big), w_0+w_1 \Big)_{\cH}\\[2mm]
    &\leq L_f\, \big( \|w_0\|^2_{\cH}+\|w_1\|^2_{\cH} \big)
\end{align*}
with the Lipschitz constant~$L_f>0$ from~\eqref{eq:fLip}. Therefore, 
\begin{align*}
    \|\Psi_{\tau}(t,v_1) - \Psi_{\tau}(t,v_2)\|_{\cH}
    \leq (1-L_f\tau)^{-1/2}(1+L_f\tau)^{1/2}\, \|v_1-v_2\|_{\cH}
\end{align*}
for all $0<\tau\leq\tau^*<\tfrac{1}{L_f}$. Setting $L_{\Psi}\coloneqq\tfrac{2L_f}{1-L_f\tau^*}$, we obtain
\[ 
    (1-L_f\tau)^{-1/2}(1+L_f\tau)^{1/2}
    \leq 1+L_{\Psi}\tau 
\]
for all $0<\tau\leq \tau^*$, which leads to the desired result. 
\end{proof}
Similarly as in the previous subsection, by adding perturbations to the dynamic part, computing one step of the midpoint scheme involves solving a saddle point problem of the following form
\[
	\begin{bmatrix} \id + \frac{\tau}{2} \calA & \tau \calB^* \\ \calB & \end{bmatrix}
	\begin{bmatrix} U^{n+1} \\ \lambda^{n+1} \end{bmatrix}
	= 
	\begin{bmatrix} (\id - \frac{\tau}{2} \calA)U^n + \tau f^{n+\frac{1}{2}} + \xi^n(\tau)\\ g(t^{n+1}) \end{bmatrix},
\]
where $\xi^n(\tau) \in \Vo$ can be defined as $\sigma\tau^{5/2}\tilde{\xi}^n$ by the natural choice of $p=q=2$, a constant scale~$\sigma$ and i.i.d.~standard normal random variables $\tilde{\xi}^n \sim\calN(0,1)$. 
%
%
\subsection{Probabilistic exponential Euler scheme}\label{subs:exp_Euler}
As a third example, we derive the probabilistic version of the exponential Euler method. The corresponding deterministic scheme for constrained systems has been introduced and examined in~\cite{AltZ20}. Recall that~$\calAo$ denotes the restriction of $\calA$ to the kernel of the constraint operator~$\calB$. Moreover, we introduce $\cHo$ as the closure of $\Vo$ in $\cH$. Due to Assumption~\ref{assA}, we know that $-\calAo$ generates an analytic semigroup on $\cHo$; see \cite[Ch.~7, Th.~2.7]{Paz83}. 

Exponential integrators are based on the variation-of-constants formula, which reads
\begin{align}
    u_{\ker}(t^{n+1})
    &=u(t^{n+1})-\calB^{-}g(t^{n+1}) \nonumber \\
    &= e^{-\tau\calA_{\ker}} \big[u(t^n)-\calB^-g(t^n)\big]
    +\int_{t^n}^{t^{n+1}}e^{-(t^{n+1}-s)\calA_{\ker}} \iota_0\big[f(s,u(s))-\calB^-\dot{g}(s)\big] \ds \label{eq:solformula}
\end{align}
with $\iota_0\colon\cH\equiv\cH^*\rightarrow\cH^*_{\ker}\equiv\cH_{\ker}$ denoting the restriction of test functions (which will be omitted from now on). 
Yet another important tool for the construction is the set of the recursively defined $\varphi$-functions, 
\begin{equation}\label{eq:varphi}
    \varphi_0(z)\coloneqq e^z,\qquad 
    \varphi_{k+1}(z)\coloneqq\dfrac{\varphi_k(z)-\varphi_k(0)}{z}
\end{equation}
with $\varphi_k(0)=1/k!$ for $z=0$. Since $-\calAo$ generates an analytic semigroup, it follows from~\cite[Lem.~2.4]{HocO10} that $\varphi_k(-\tau \calAo)\colon \cHo \to \cHo$, given by 
\[
    \varphi_k(-\tau \calAo)
    = -\tau^{-1} \calAo^{-1}\, \big( \varphi_{k-1}(-\tau \calAo)-\varphi_{k-1}(0) \big),
\]
defines a bounded operator. 

Now, the consideration of suitable quadrature rules in~\eqref{eq:solformula} leads to exponential integrators. The left-hand rule yields the exponential Euler method as introduced in~\cite{AltZ20}. It takes the form 
\begin{align}
	u^{n+1} 
	= \calB^-g(t^{n+1}) + \varphi_0(-\tau\calA_{\ker})\big(u^n - \calB^-g(t^n)\big) + \tau\varphi_1(-\tau\calAo)  \big( f(t^n, u^n) - \calB^- \dot g(t^n) \big). \label{eq:expEuler}
\end{align}
Note that this is an explicit scheme. Considering only the part in the kernel, we get
\begin{equation}\label{eq:PsiExpEul}
       \Psi_{\tau}(t,v) 
    = \varphi_0(-\tau\calA_{\ker})\, v + \tau\varphi_1(-\tau\calAo) \big( f(t, v + \calB^- g(t)) - \calB^- \dot g(t) \big) 
\end{equation}
and it is shown in~\cite{AltZ20} that this method is of order $q=1$. 
\begin{lemma}\label{Lem:LipExpEul}
Suppose that Assumptions~\ref{assB}, \ref{assA}, and~\ref{assf} hold and let $\calA$ be symmetric. Then the exponential Euler scheme satisfies the Lipschitz property~\eqref{eq:lipPsi}. 
\end{lemma}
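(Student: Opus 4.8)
The plan is to exploit the fully explicit form of the flow map in~\eqref{eq:PsiExpEul} and reduce everything to two operator norm bounds together with the Lipschitz continuity of $f$. First I would form the difference
\[
    \Psi_{\tau}(t,v_1) - \Psi_{\tau}(t,v_2)
    = \varphi_0(-\tau\calAo)(v_1-v_2)
    + \tau\varphi_1(-\tau\calAo)\big[ f(t,v_1+\calB^-g(t)) - f(t,v_2+\calB^-g(t)) \big],
\]
observing that the inhomogeneous terms $\calB^-g(t)$ and $\calB^-\dot g(t)$ do not depend on the second argument and hence drop out. Applying the triangle inequality in the $\cH$-norm then splits the estimate into a semigroup part governed by $\varphi_0$ and a forcing part governed by $\varphi_1$.

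The core of the argument is to show $\|\varphi_0(-\tau\calAo)\|_{\cHo\to\cHo}\le 1$ and $\|\varphi_1(-\tau\calAo)\|_{\cHo\to\cHo}\le 1$, and this is precisely where the symmetry hypothesis enters. Combined with Assumption~\ref{assA}, symmetry of $\calA$ makes $\calAo$ self-adjoint on $\cHo$ and positive definite: ellipticity on $\Vo$ together with the continuous embedding $\Vo\hookrightarrow\cHo$ yields $\langle\calAo v,v\rangle\ge\mu\,\|v\|_{\cH}^2$ for some $\mu>0$, so the spectrum of $\calAo$ lies in $[\mu,\infty)$. Self-adjointness licenses the spectral (functional) calculus, whence $\|\varphi_k(-\tau\calAo)\|_{\cHo}=\sup_{\lambda\in\mathrm{spec}(\calAo)}|\varphi_k(-\tau\lambda)|$. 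For $\varphi_0$ this is $\sup_{\lambda\ge\mu}e^{-\tau\lambda}\le 1$ (indeed $\le e^{-\mu\tau}$), and for $\varphi_1$ the elementary scalar bound $\sup_{x>0}\frac{1-e^{-x}}{x}\le 1$ gives the claim.

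With these bounds in hand, I would invoke the Lipschitz estimate~\eqref{eq:fLip} on the argument difference — which, after the cancellation above, equals $v_1-v_2$ — to obtain
\[
    \|\Psi_{\tau}(t,v_1)-\Psi_{\tau}(t,v_2)\|_{\cH}
    \le \|v_1-v_2\|_{\cH} + \tau L_f\, \|v_1-v_2\|_{\cH}
    = (1+L_f\tau)\, \|v_1-v_2\|_{\cH},
\]
so that~\eqref{eq:lipPsi} holds with the explicit choice $L_{\Psi}\coloneqq L_f$; keeping the sharper factor $e^{-\mu\tau}$ from $\varphi_0$ would even give a contractive leading term. Note that, in contrast to the implicit Euler and midpoint lemmas, no restriction $\tau\le\tau^*$ is needed here, since the explicit scheme produces no inverse factor $(1-cL_f\tau)^{-1}$.

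The main obstacle is really the justification of the two operator norm bounds, and it explains why symmetry is imposed in the statement. In the self-adjoint case the functional calculus delivers the sharp constants $1$ immediately; for a merely sectorial, non-symmetric $\calAo$ the $\varphi$-functions of~\eqref{eq:varphi} are still bounded operators by~\cite[Lem.~2.4]{HocO10}, but their norms carry sector-dependent constants that need not equal $1$, which would force a larger $L_{\Psi}$ and a more delicate resolvent/contour estimate. I would also briefly confirm that the objects live in $\cHo$, so that the restriction $\calAo$ and its calculus are the correct operators: this is guaranteed because $v_1,v_2\in\Vo$ and each $\varphi_k(-\tau\calAo)$ maps $\cHo$ into $\cHo$.
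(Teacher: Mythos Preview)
Your proposal is correct and follows essentially the same approach as the paper: form the difference of~\eqref{eq:PsiExpEul}, apply the triangle inequality, bound the $\varphi$-function operators, and invoke the Lipschitz continuity of $f$. The only cosmetic difference is that the paper cites \cite[Ex.~2.3]{HocO10} for $C_{\varphi_0}\le 1$ under symmetry and merely uses boundedness of $\varphi_1$ (yielding $L_{\Psi}=C_{\varphi_1}L_f$), whereas you supply the spectral-calculus argument directly and sharpen the $\varphi_1$-bound to $1$ as well, arriving at the explicit constant $L_{\Psi}=L_f$.
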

\begin{proof}
Direct calculations show 
\begin{align*}
    \big\|\Psi_{\tau}(t,v_1)-\Psi_{\tau}(t,v_2)\big\|_{\cH}
    &\le \big\|\varphi_0(-\tau\calA_{\ker})\, (v_1-v_2) \big\|_{\cH} \\
    &\qquad + \tau\, \big\|\varphi_1(-\tau\calA_{\ker})\, \big(f(t, v_1 + \calB^- g(t)) - f(t, v_2 + \calB^- g(t)) \big) \big\|_{\cH} \\
    &\le \big( C_{\varphi_0} + \tau\, C_{\varphi_1} L_f \big)\, \big\| v_1 - v_2 \big\|_{\cH}, 
\end{align*}
where we have used the boundedness of the $\varphi$-functions. Due to the assumed symmetry of $\calA$, \cite[Ex.~2.3]{HocO10} shows that $C_{\varphi_0}\le 1$, which completes the proof.  
\end{proof}
\begin{remark}
Also in the finite-dimensional setting we need that $A\in\R^{n,n}$ is symmetric positive definite in order to guarantee that $C_{\varphi_0}$ is bounded by one. 
\end{remark}
The practical implementation of the exponential Euler method involves the solution of several saddle point problems. This includes the computation of~$x\coloneqq\calB^-g(t^n) \in \Vc \subseteq \V$ (see Section~\ref{sect:prelim:assumptions}), which can be expressed as the solution of the stationary auxiliary problem
\begin{subequations}
	\label{eqn:Binvers}	
	\begin{alignat}{4}
		&\calA x&\ +\ &\calB^* \nu\ &=&\ 0  &&\qquad\text{in } \V^* , \label{eqn:Binvers:a} \\
		&\calB x& & &=&\ g(t^n) &&\qquad\text{in } \Q^*. \label{eqn:Binvers:b}
	\end{alignat}
\end{subequations}
By \cite[Lem.~3.1]{AltZ20}, this problem yields a unique solution pair $(x, \nu)$ with~$\nu$ being a Lagrange multiplier we are not particularly interested in. The definition of~$\varphi_1$ implies that $\tau\varphi_1(-\tau\calAo)\, h = - \big[ \varphi_{0}({-\tau \calAo}) - \id \big]\, \calAo^{-1}\, h$ for all $h\in\cHo$. Hence, the exponential Euler scheme can be rewritten as 
\begin{align*}
	u^{n+1} 
	= \calB^-g(t^{n+1}) + \varphi_{0}(-\tau\calA_{\ker}) \big(u^n - \calB^-g(t^n) - w^n\big) + w^n,
\end{align*}
where the auxiliary variable~$w^n\in \Vo$ is defined as the solution of
\begin{subequations}
	\label{eq:wn}	
	\begin{alignat}{4}
		&\calA w^n&\ +\ &\calB^\ast \nu^n\ &=&\ f(t^n, u^n) - \calB^- \dot g(t^n)  &&\qquad\text{in } \V^* ,\\
		&\calB w^n& & &=&\ 0 &&\qquad\text{in } \Q^*.
	\end{alignat}
\end{subequations}
Similarly as before, $\nu^n$ acts as Lagrange multiplier corresponding to
the constraint. To compute the action of $\varphi_{0}(-\tau\calA_{\ker})$, one can consider the corresponding PDAE formulation. This leads to
\begin{equation}\label{eq:Exp_Euler}
    u^{n+1}=\calB^-g(t^{n+1})+z(t^{n+1})+w^n, 
\end{equation}
where $z$ is the solution of the linear and homogeneous system 
\begin{subequations}
\label{eq:zn}	
\begin{alignat}{5}
	\dot{z}(t)&\ +\ &\calA z(t)&\ +\ &\calB^\ast \mu(t)\, &= 0 &&\qquad\text{in } \V^*, \label{eq:zn:a}	\\
	& &\calB z(t)& & &= 0 &&\qquad\text{in } \Q^* \label{eq:zn:b}	
\end{alignat}
\end{subequations}
on the time interval $[t^n,t^{n+1}]$ with initial condition~$z(t^n) = u^n-\calB^-g(t^n)-w^n$. Here, $\mu(t)$ is again a Lagrange multiplier. 

Adding perturbations to this type of method is a bit more delicate than for the previously introduced methods. Moreover, there are various possible ways to do this ensuring that the model constraint is satisfied. We discuss one approach in detail. 

One possibility to incorporate perturbations is to add them directly to~\eqref{eq:Exp_Euler}, i.e., to consider 
\begin{align*}
	U^{n+1} 
	= \calB^-g(t^{n+1}) + z(t^{n+1}) + w^n + \xi^n(\tau).
\end{align*}
with $\xi^n(\tau)\in \Vo$. In practice, this would mean that $\xi^n(\tau)$ equals the solution of the saddle point problem 
\begin{subequations}
\label{eq:xin}	
\begin{alignat}{4}
	&\calA \xi^n(\tau)&\ +\ &\calB^\ast \nu^n\ &=&\ \sigma\tau^{p+1/2}\calA\, \tilde{\xi}^n &&\qquad\text{in } \V^* ,\\
	&\calB \xi^n(\tau)& & &=&\ 0 &&\qquad\text{in } \Q^*,
\end{alignat}
\end{subequations}
where $\sigma$ is again the scaling factor of the noise and the $\tilde{\xi}^n$ are i.i.d.~standard normal random variables i.e., $\tilde{\xi}^n \sim\calN(0,1)$. The unique solvability of system \eqref{eq:xin} follows again from~\cite[Lem.~3.1]{AltZ20}. We summarize the necessary steps in Algorithm~\ref{alg:exp_euler_variant_a}.
\begin{algorithm}
    \caption{Probabilistic exponential Euler scheme}
    \label{alg:exp_euler_variant_a}
    \begin{algorithmic}[1]
        \State \textbf{Input}: step size $\tau$, consistent initial data $U^0=u^0\in \V$, right-hand sides $f$, $g$, problem-independent scaled noise $\sigma$ \vspace{0.5em}
        \For{$n=0$ \textbf{to} $N-1$}
            \State compute $\mathcal{B}^{-}g(t^n)$, $\mathcal{B}^{-}g(t^{n+1})$, and $\mathcal{B}^{-}\dot{g}(t^{n})$ by~\eqref{eqn:Binvers}
            \State compute $w^n$ by~\eqref{eq:wn}
            \State compute $z$ as the solution of \eqref{eq:zn} on $[t^n,t^{n+1}]$ with initial data $U^n - \mathcal{B}^{-}g(t^n) - w^n$
            \State compute $\xi^n(\tau)$ as the solution of \eqref{eq:xin} with $\sigma$, $p=1$, and i.i.d.~$\tilde{\xi}^n \sim \mathcal{N}(0,1)$
            \State set $U^{n+1} = \mathcal{B}^{-}g(t^{n+1}) + z(t^{n+1}) + w^n + \xi^n(\tau)$
        \EndFor
    \end{algorithmic}
\end{algorithm}
\begin{remark}\label{Rem5.1}
A single step of the probabilistic exponential Euler scheme involves the solution of four stationary and one transient saddle point problems with only one evaluation of the nonlinear function $f$. Note that all these systems are linear and that the time-dependent system is homogeneous, allowing it to be solved without further regularization.
\end{remark}
%
%
\subsection{Probabilistic exponential integrator of second order}
In this part, we analyze a probabilistic exponential integrator of second order for constrained parabolic systems. As underlying deterministic integrator, we consider  
\begin{equation}\label{eq:exp_second}
    u^{n+1} = u^{n+1}_\text{Eul} + \tau\varphi_2(-\tau \calA_{\ker}) \Big[ f(t^{n+1}, u^{n+1}_{\text{Eul}})-\calB^{-}\dot{g}(t^{n+1})-f(t^n,u^n)+\calB^-\dot{g}(t^{n})\Big]
\end{equation}
as introduced in~\cite{AltZ20}. Therein, $u_{\text{Eul}}^{n+1}$ denotes the result of one step of the deterministic exponential Euler scheme~\eqref{eq:Exp_Euler}. 
Considering only the part in the kernel, we get 
\[
    \Psi_{\tau}(t,v) 
    = \Psi_{\tau,\text{Eul}} (t, v) + \tau\varphi_2(-\tau \calA_{\ker}) \Big[ f(t+\tau,\Psi_{\tau,\text{Eul}} (t, v))-\calB^{-}\dot{g}(t+\tau)-f(t,v)+\calB^-\dot{g}(t)\Big]
\]
where $\Psi_{\tau,\text{Eul}}(t\,v)$ is given by \eqref{eq:PsiExpEul}. It is shown in~\cite{AltZ20} that this method is of order~$q=2$. As before, we discuss the Lipschitz condition in the following lemma. 
\begin{lemma}
Suppose that Assumptions~\ref{assB}, \ref{assA}, and~\ref{assf} hold and let $\calA$ be symmetric. Then the exponential integrator of second order satisfies the Lipschitz property~\eqref{eq:lipPsi}. 
\end{lemma}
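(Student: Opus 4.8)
The plan is to follow the structure of the proof of Lemma~\ref{Lem:LipExpEul} and to show that the second-order correction term only inflates the Lipschitz constant at order~$\tau$. Writing $E_i\coloneqq\Psi_{\tau,\text{Eul}}(t,v_i)$ for $i=1,2$, I would first note that the terms $\calB^-\dot g(t+\tau)$ and $\calB^-\dot g(t)$ inside the correction bracket do not depend on the argument and therefore cancel when forming the difference. This leaves
\begin{align*}
\Psi_\tau(t,v_1)-\Psi_\tau(t,v_2)
= (E_1-E_2)
+ \tau\,\varphi_2(-\tau\calAo)\big[\, f(t+\tau,E_1)-f(t+\tau,E_2)-f(t,v_1)+f(t,v_2)\,\big].
\end{align*}
Next I would apply the triangle inequality, the boundedness of $\varphi_2(-\tau\calAo)$ (guaranteed with some constant $C_{\varphi_2}$ by~\cite[Lem.~2.4]{HocO10}, exactly as in the previous lemma), and the Lipschitz bound~\eqref{eq:fLip} on~$f$, which gives
\begin{align*}
\big\|\Psi_\tau(t,v_1)-\Psi_\tau(t,v_2)\big\|_{\cH}
\le \|E_1-E_2\|_{\cH}
+ \tau\, C_{\varphi_2} L_f\big(\|E_1-E_2\|_{\cH}+\|v_1-v_2\|_{\cH}\big).
\end{align*}

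The decisive step is then to insert the estimate on the exponential Euler step established within the proof of Lemma~\ref{Lem:LipExpEul}. Because $\calA$ is symmetric, that proof provides $C_{\varphi_0}\le1$, hence $\|E_1-E_2\|_{\cH}\le(1+\tau C_{\varphi_1}L_f)\|v_1-v_2\|_{\cH}$. Substituting this bound and collecting terms, every contribution beyond the leading~$1$ carries at least one factor of~$\tau$; the sole term of order~$\tau^2$ is controlled by $\tau\tau^*$ for $\tau\le\tau^*$. Choosing
\[
L_{\Psi}\coloneqq C_{\varphi_1}L_f + 2\,C_{\varphi_2}L_f + \tau^*\,C_{\varphi_1}C_{\varphi_2}L_f^2
\]
then produces the claimed inequality~\eqref{eq:lipPsi}.

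I do not expect any genuine difficulty here; the argument is essentially bookkeeping layered on top of Lemma~\ref{Lem:LipExpEul}. The one point that must be respected is that the coefficient of $\|v_1-v_2\|_{\cH}$ has leading term exactly~$1$, which is what forces the symmetry hypothesis on~$\calA$ (through $C_{\varphi_0}\le1$); all remaining terms stem from the second-order correction $\tau\varphi_2(\cdots)[\cdots]$ and are of order~$\tau$ or higher, so they are harmlessly absorbed into $L_{\Psi}\tau$.
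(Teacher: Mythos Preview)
Your argument is correct and follows essentially the same route as the paper: form the difference, apply the triangle inequality together with the boundedness of $\varphi_2(-\tau\calAo)$ and the Lipschitz bound on~$f$, then feed in the estimate from Lemma~\ref{Lem:LipExpEul} (which is where the symmetry of~$\calA$ enters via $C_{\varphi_0}\le1$). The paper's proof is in fact terser than yours; it does not spell out the explicit constant~$L_{\Psi}$ that you compute.
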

\begin{proof}
Direct calculations together with Lemma~\ref{Lem:LipExpEul} show 
\begin{align*}
    \big\|&\Psi_{\tau}(t,v_1)-\Psi_{\tau}(t,v_2)\big\|_{\cH} \\
    &\qquad\le \big\|\Psi_{\tau,\text{Eul}}(t,v_1)-\Psi_{\tau,\text{Eul}}(t,v_2) \big\|_{\cH} + \tau\, \big\|\varphi_2(-\tau\calA_{\ker})\, \big(f(t+\tau,\Psi_{\tau,\text{Eul}}(t,v_1)) \\ 
    &\hspace{5.8cm} - f(t+\tau, \Psi_{\tau,\text{Eul}}(t,v_2)) \big) - \big(f(t,v_1)-f(t,v_2)\big) \big\|_{\cH} \\
    &\qquad\le \big( 1 + \tau \tilde L_{\Psi} \big)\, \big\| v_1 - v_2 \big\|_{\cH}
\end{align*}
for some $\tilde L_{\Psi}>0$, where we have used again the boundedness of the $\varphi$-functions and the symmetry of $\calA$. 
\end{proof}

Similar to the first-order exponential Euler method, the practical implementation of this method involves the solution of several saddle point problems. By the recursion formula \eqref{eq:varphi}, the approximation at time $t^{n+1}$ is defined by 
\begin{equation}
    u^{n+1} 
    = u^{n+1}_{\text{Eul}}+z(t^{n+1})-\widehat{w}^n+\overline{w}^n,
\end{equation}
where $\overline{w}^n$ is computed as the solution of the stationary problem
\begin{subequations}
	\label{eq:overline_wn}	
	\begin{alignat}{4}
		&\calA \overline{w}^n&\ +\ &\calB^\ast \overline{\nu}^n\ &=&\ f(t^{n+1},u^{n+1}_{\text{Eul}})-\calB^{-}\dot{g}(t^{n+1})-f(t^n,u^n)+\calB^-\dot{g}(t^{n}) &&\qquad\text{in } \V^* ,\\
		&\calB \overline{w}^n& & &=&\ 0 &&\qquad\text{in } \Q^*,
	\end{alignat}
\end{subequations}
and $\widehat{w}^n$ as the solution of 
\begin{subequations}
	\label{eq:hat_wn}	
	\begin{alignat}{4}
		&\calA \widehat{w}^n&\ +\ &\calB^\ast \widehat{\nu}^n\ &=&\ \tfrac{1}{\tau}\,\overline{w}^n &&\qquad\text{in } \V^* ,\\
		&\calB \widehat{w}^n& & &=&\ 0 &&\qquad\text{in } \Q^*.
	\end{alignat}
\end{subequations}
Moreover, $z(t^{n+1})$ is the solution of the homogeneous system \eqref{eq:zn} on the time interval $[t^{n},t^{n+1}]$ with initial value $z(t^n)=\widehat{w}^n$. The Lagrange multipliers $\overline{\nu}^n$ and $\widehat{\nu}^n$ are once more not of particular interest and simply serve as dummy variables. For more details and the corresponding convergence proof, we refer to~\cite[Sect.~4]{AltZ20}. 

For the construction of a randomized exponential integrator of second order, we follow the approach from the previous section but with $p=2$. This then leads to the probabilistic integrator shown in Algorithm~\ref{alg:exp_sec_variant_a}.  
\begin{algorithm}
    \caption{Probabilistic exponential integrator of second order }
    \label{alg:exp_sec_variant_a}
    \begin{algorithmic}[1]
    \State \textbf{Input}: step size $\tau$, consistent initial data $U^0=u^0\in \V$, right-hand sides $f$, $g$, problem-independent scaled noise~$\sigma$
        \vspace{0.5em}
        \For{$n=0$ \textbf{to} $N-1$}
        \State compute one step of the deterministic exponential Euler method~$U^{n+1}_{\text{Eul}}$ given by \eqref{eq:Exp_Euler}
        \State compute $\calB^{-}\dot{g}(t^{n})$ and $\calB^{-}\dot{g}(t^{n+1})$ by~\eqref{eqn:Binvers}
        \State compute $\overline{w}^n$ by~\eqref{eq:overline_wn}
        \State compute $\widehat{w}^n$ by~\eqref{eq:hat_wn} 
        \State compute $z$ as the solution of \eqref{eq:zn} on $[t^n,t^{n+1}]$ with initial data $\widehat{w}^n$
        \State compute $\xi^n(\tau)$ as the solution of~\eqref{eq:xin} with $\sigma$, $p=2$, and i.i.d.~$\tilde{\xi}^n \sim\calN(0,1)$
        \State set $U^{n+1} 
	= U^{n+1}_{\text{Eul}} + z(t^{n+1}) - \widehat{w}^n +\overline{w}^n + \xi^n(\tau)$
        \EndFor
    \end{algorithmic}
\end{algorithm}
%
%
\subsection{Numerical example}
This part is devoted to the numerical validation of the convergence result presented in Theorem \ref{thm:meanErr}. In this test case, we investigate the mean square convergence rates of the probabilistic integrators introduced in the previous subsections when applied to the semi-linear heat equation with a constraint on the integral mean; see Example~\ref{ex:PDAE}. We prescribe homogeneous Dirichlet boundary conditions, i.e., $u(t, 0) = u(t, 1) = 0$, set the right-hand side to $g(t) = t$, and initialize the system consistently by 
\[
    u^0(x) 
    = \sin(2\pi x)^3,\qquad 
    \calB u^0 
    = \int_0^1 u^0(x) \sin(\pi x) \dx
    = 0 
    = g(0).
\]
For the numerical solution, we have used a second-order finite difference approximation of the Laplacian with $100$ grid points. As time interval, we consider~$[0,T] = [0, 0.1]$. We vary the value of~$p$ in Assumption \ref{ass:noise}, namely $p\in\{1/2,1,3/2\}$ for the first-order methods and $p\in\{1/2,1,3/2,2\}$ for the second-order methods. Then, we compute~$1000$ trajectories of the numerical solution with a fixed noise scale $\sigma= 4$ and compute the approximate mean square order of convergence for the implicit Euler method, the midpoint scheme, 
and the exponential integrators. 
The results are depicted in Figure \ref{Fig:order:all:PDAE} and show the predicted $\min\{p,q\}$-order convergence.  
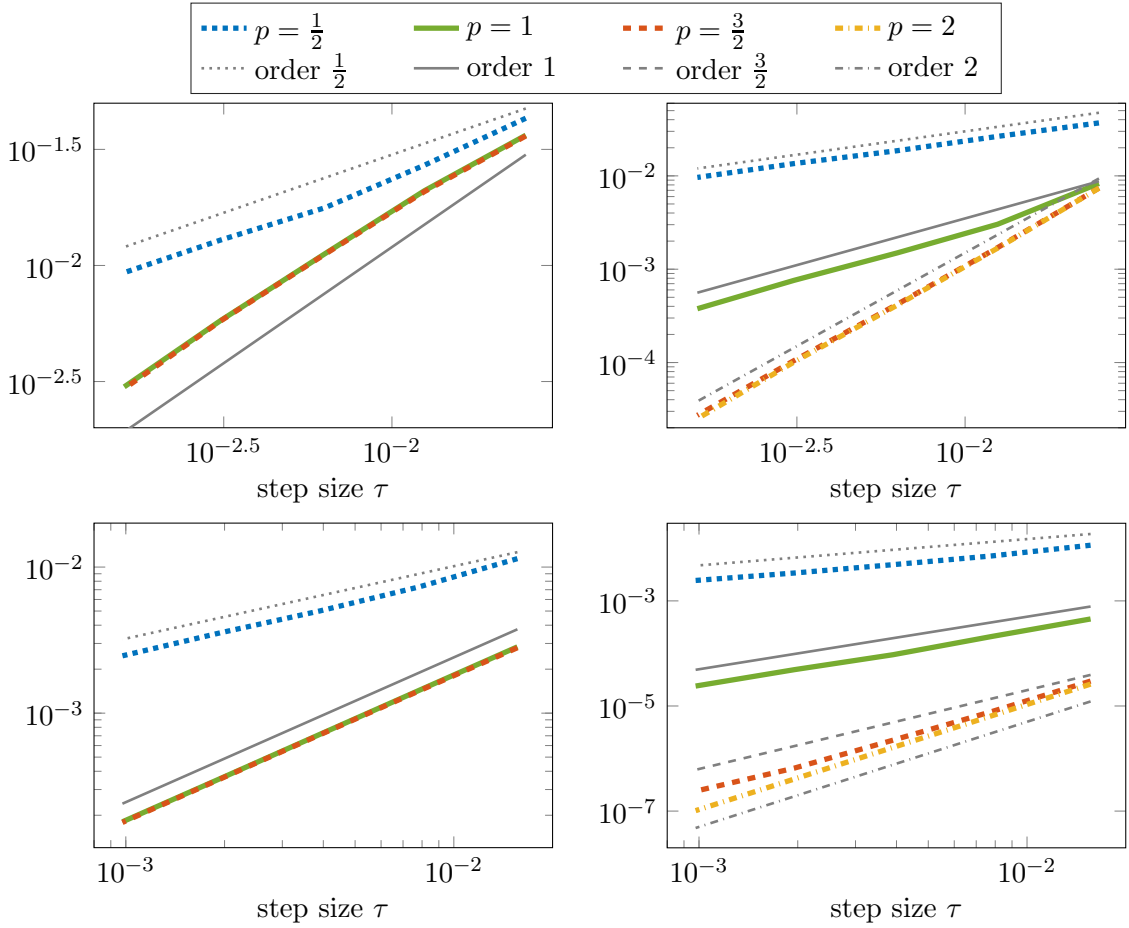
\begin{figure}
\centering
%
%
\begin{tikzpicture}

\begin{axis}[%
width=2.4in,
height=1.7in,
scale only axis,
xmode=log,
xlabel={step size $\tau$},
xmin=0.0013,
xmax=0.03,
xminorticks=true,
ymode=log,
ymin=0.002,
ymax=0.05,
yminorticks=true,
axis background/.style={fill=white},
legend style={at={(1.98,1.31)}, legend cell align=left, align=left, draw=white!15!black},
legend columns=4
]


\addplot [color=mycolor1, dotted, line width=2.0pt]
  table[row sep=crcr]{%
	0.025	0.043074\\
	0.0125	0.027046\\
	0.0063	0.017695\\
	0.0031	0.012879\\
	0.0016	0.0093137\\
};
\addlegendentry{$p=\frac12$\qquad} 

\addplot [color=mycolor5, line width=2.0pt]
  table[row sep=crcr]{%
0.025	0.0364\\
0.0125	0.021\\
0.0063	0.0112\\
0.0031	0.0058\\
0.0016	0.003\\
};
\addlegendentry{$p=1$\qquad} 

\addplot [color=mycolor2, dashed, line width=2.0pt]
table[row sep=crcr]{%
0.025	0.036124\\
0.0125	0.020813\\
0.0063	0.011141\\
0.0031	0.0057755\\
0.0016	0.0029453\\
};
\addlegendentry{$p=\frac32$\qquad} 

\addplot [color=mycolor3, dashdotted, line width=2.0pt]
table[row sep=crcr]{%
1 1\\
1 1\\
};
\addlegendentry{$p=2$\ }

\addplot [color=gray, dotted, line width=1pt]
table[row sep=crcr]{%
0.025	0.0474341649025257\\
0.0016	0.012\\
};
\addlegendentry{order $\frac12$\qquad} 

\addplot [color=gray, line width=1pt]
table[row sep=crcr]{%
0.025	0.03\\
0.0016	0.00192\\
};
\addlegendentry{order $1$\qquad } 

\addplot [color=gray, dashed, line width=1pt]
table[row sep=crcr]{%
	1 1\\
	1 1\\
};
\addlegendentry{order $\frac32$\qquad } 

\addplot [color=gray, dashdotted, line width=1pt]
table[row sep=crcr]{%
	1 1\\
	1 1\\ 
};
\addlegendentry{order $2$\ } 

\end{axis}


\begin{axis}[%
	width=2.4in,
	height=1.7in,
	at={(3.0in,0.in)},
	scale only axis,
	xmode=log,
	xlabel={step size $\tau$},
	xmin=0.0013,
	xmax=0.03,
	xminorticks=true,
	ymode=log,
	ymin=2e-5,
	ymax=0.06,
	yminorticks=true,
	axis background/.style={fill=white},
	]
			
	\addplot [color=mycolor1, dotted, line width=2.0pt]
	table[row sep=crcr]{%
	0.025	0.036929\\
	0.0125	0.026492\\
	0.0063	0.018631\\
	0.0031	0.013541\\
	0.0016	0.0096227\\
	};
	
	\addplot [color=mycolor5, line width=2.0pt]
	table[row sep=crcr]{%
	0.025	0.0082972\\
	0.0125	0.0030331\\
	0.0063	0.0014988\\
	0.0031	0.00075946\\
	0.0016	0.00037716\\
	};
	
	\addplot [color=mycolor2, dashed, line width=2.0pt]
	table[row sep=crcr]{%
	0.025	0.0074104\\
	0.0125	0.0017047\\
	0.0063	0.00042257\\
	0.0031	0.00010485\\
	0.0016	2.7209e-05\\
	};
	
\addplot [color=mycolor3, dashdotted, line width=2.0pt]
table[row sep=crcr]{%
	0.025	0.0073867\\
	0.0125	0.0016835\\
	0.0063	0.00041405\\
	0.0031	0.00010113\\
	0.0016	2.5038e-05\\
};

	
	\addplot [color=gray, dotted, line width=1pt]
	table[row sep=crcr]{%
		0.025	0.0474341649025257\\
		0.0016	0.012\\
	};
	
	\addplot [color=gray, line width=1pt]
	table[row sep=crcr]{%
	0.025	0.00875\\
	0.0016	0.00056\\
	};
	
	\addplot [color=gray, dashdotted, line width=1pt]
	table[row sep=crcr]{%
		0.025	0.009375\\
		0.0016	3.84e-05\\
	};
			
\end{axis}


\begin{axis}[%
	width=2.4in,
	height=1.7in,
	at={(0.0in,-2.2in)},
	scale only axis,
	xmode=log,
	xlabel={step size $\tau$},
	xmin=0.0008,
	xmax=0.02,
	xminorticks=true,
	ymode=log,
	ymin=1.2e-04,
	ymax=0.02,
	yminorticks=true,
	axis background/.style={fill=white},
	]
	
\addplot [color=mycolor1, dotted, line width=2.0pt]
table[row sep=crcr]{%
0.015625	0.011422\\
0.0078125	0.0073044\\
0.0039062	0.0050095\\
0.0019531	0.0035597\\
0.00097656	0.0024686\\
};

\addplot [color=mycolor5, line width=2.0pt]
table[row sep=crcr]{%
0.015625	0.0028437\\
0.0078125	0.001428\\
0.0039062	0.00071623\\
0.0019531	0.00035878\\
0.00097656	0.00018037\\
};

\addplot [color=mycolor2, dashed, line width=2.0pt]
table[row sep=crcr]{%
0.015625	0.0028034\\
0.0078125	0.0014161\\
0.0039062	0.00071141\\
0.0019531	0.00035663\\
0.00097656	0.00017854\\
};

\addplot [color=gray, dotted, line width=1pt]
table[row sep=crcr]{%
0.015625	0.0126491106406735\\
0.00097656	0.0032\\
};

\addplot [color=gray, line width=1pt]
table[row sep=crcr]{%
0.015625	0.00375\\ 
0.00097656	0.00024\\
};
	
\end{axis}


\begin{axis}[%
	width=2.4in,
	height=1.7in,
	at={(3.0in,-2.2in)},
	scale only axis,
	xmode=log,
	xlabel={step size $\tau$},
	xmin=0.0008,
	xmax=0.02,
	xminorticks=true,
	ymode=log,
	ymin=2e-08,
	ymax=0.03,
	ytick={1e-3,1e-5,1e-7},
	yminorticks=true,
	axis background/.style={fill=white},
	]
	
	\addplot [color=mycolor1, dotted, line width=2.0pt]
	table[row sep=crcr]{%
	0.015625	0.011462\\
	0.0078125	0.0071766\\
	0.0039062	0.0048922\\
	0.0019531	0.0033923\\
	0.00097656	0.0024471\\
	};
	
	\addplot [color=mycolor5, line width=2.0pt]
	table[row sep=crcr]{%
	0.015625	0.00045309\\
	0.0078125	0.0002104\\
	0.0039062	9.4584e-05\\
	0.0019531	4.91e-05\\
	0.00097656	2.3947e-05\\
	};
	
	\addplot [color=mycolor2, dashed, line width=2.0pt]
	table[row sep=crcr]{%
	0.015625	2.981e-05\\
	0.0078125	7.8327e-06\\
	0.0039062	2.2381e-06\\
	0.0019531	6.574e-07\\
	0.00097656	2.396e-07\\
	};
	
	\addplot [color=mycolor3, dashdotted, line width=2.0pt]
	table[row sep=crcr]{%
	0.015625	2.6081e-05\\
	0.0078125	6.5175e-06\\
	0.0039062	1.6298e-06\\
	0.0019531	4.0743e-07\\
	0.00097656	1.0198e-07\\
	};
	
	
	\addplot [color=gray, dotted, line width=1pt]
	table[row sep=crcr]{%
	0.015625	0.01875\\ 
	0.00097656	0.00468749399999616\\
	};
	
	\addplot [color=gray, line width=1pt]
	table[row sep=crcr]{%
		0.015625	0.00078125\\
		0.00097656	4.8828e-05\\
	};
	
	\addplot [color=gray, dashed, line width=1pt]
	table[row sep=crcr]{%
	0.015625	3.90625e-05\\ 
	0.00097656	6.103492187515e-07\\
	};
	
	\addplot [color=gray, dashdotted, line width=1pt]
	table[row sep=crcr]{%
	0.015625	1.220703125e-05\\ 
	0.00097656	4.768347168e-08\\
	};
	
\end{axis}

\end{tikzpicture}%
    \caption{Convergence history for the mean square error for the four probabilistic time integrators introduced in Section~\ref{Sec.PNM}: implicit Euler method (upper left), midpoint scheme (upper right), exponential Euler (lower left), and the second-order exponential integrator (lower right). Numerical results for different values of $p$ and fixed noise scale $\sigma = 4$. }
    \label{Fig:order:all:PDAE} 
\end{figure}
%
%
\section{Calibrating Forward Uncertainty Propagation}\label{Sec.Calib}
This section aims to discuss one potential approach for choosing $\sigma$ when employing randomized time integrators to solve constrained parabolic systems. Let us consider once more the constrained FitzHugh--Nagumo model from Examples~\ref{ex:FitzHugh} and~\ref{ex:FitzHugh:revisit}. Through this example, we can observe that the scale parameter $\sigma$ controls the apparent uncertainty in the solver; see Figure~\ref{Fig:pert:Fitz:ds}. While the random draws contract towards the true solution, the scale parameter $\sigma$ affects the error magnitude introduced by the probabilistic numerical method. 
To overcome this drawback, we follow~\cite{ConGSSZ17} and calibrate $\sigma$ to match the error magnitude given by classical error indicators. Let us denote the approximation given by the probabilistic solver (with step size~$\tau$) by~$U^n_{\tau,\sigma}$ and the corresponding deterministic approximation by~$u^n_{\tau,0}$. The simplest error indicator might be the difference between solutions obtained with different step sizes, i.e., $E^n\coloneqq u^n_{\tau,0}-u^{2n}_{\tau/2,0}$.

In line with~\cite[Sect.~3]{ConGSSZ17}, we study a probability distribution denoted  by $\pi(\sigma)$, which achieves maximum likelihood when the desired matching occurs. This estimation of scale matching involves the comparison of a Gaussian approximation  $\tilde{\mu}^n_{\tau, \sigma} = \mathcal{N}(\mathbb{E}(U^n_{\tau,\sigma}), \mathbb{V}(U^n_{\tau,\sigma}))$ and the natural Gaussian measure represented as $\nu^n_{\tau,0} = \mathcal{N}(u^n_{\tau,0}, (E^n)^2)$ at each time step. Here, $ \mathbb{V}(U^n_{\tau,\sigma})$ denotes the variance of the vector-valued random variables $U^n_{\tau,\sigma}$. Using the Bhattacharyya distance~\cite{kail1967} to measure the disparity between these two normal distributions, the probability distribution $\pi(\sigma)$ can be constructed by 
\[
    \pi(\sigma) 
    \propto \prod_n \exp \left( -d(\tilde{\mu}^n_{\tau, \sigma}, \nu^n_{\tau,0}) \right).
\]
We use the following (component-wise) formulae  
\begin{equation}\label{eq:Bdistance}
  d(\tilde{\mu}^n_{\tau, \sigma}, \nu^n_{\tau,0}) 
    = \frac{1}{4} \frac{\left(\mathbb{E}(U^n_{\tau,\sigma})-u^n_{\tau,0}\right)^2}{\mathbb{V}(U^n_{\tau,\sigma})+(E^n)^2} 
    + \frac{1}{4}\ln\left(\frac{1}{4}\left(\frac{\mathbb{V}(U^n_{\tau,\sigma})}{(E^n)^2}+\frac{(E^n)^2}{\mathbb{V}(U^n_{\tau,\sigma})}+2\right)\right)  
\end{equation}
for the Bhattacharyya distance when applied to the case of two univariate normal distributions~\cite{lee2012}. Given a randomized solution, calculating the mean and variance is straightforward from their definitions~\cite{casella2024}. Further note that equation~\eqref{eq:Bdistance} should be understood component-wise.  

We seek the value of \(\sigma\) that maximizes the function \(\pi(\sigma)\). To achieve this, we employ the MATLAB optimization algorithm \texttt{fminsearch}. By minimizing the negative of \(\pi(\sigma)\), we reformulate the problem as 
\[
\sigma^* = \arg\min_{\sigma} -\pi(\sigma).
\]
By this choice, we depict the true trajectories of the $V$ species of the constrained FitzHugh--Nagumo model and $100$ realizations from the probabilistic implicit Euler method in Figure~ \ref{fig:FitzHugh:sigma}.
Therein, we use various step sizes and a fixed noise scale $\sigma=0.5$ for comparison. The plots clearly show that the random realizations from the measure associated with the calibrated probabilistic implicit Euler solver contract towards the true solution faster than those obtained by probabilistic implicit Euler solver with a fixed $\sigma$. 
In Figure~\ref{fig:FitzHugh:variations} we present a comparison of the observed marginal variations and their mean in the calibrated probabilistic implicit Euler method and the error indicator. This comparison demonstrates a good agreement in the marginal variances. 
Algorithm~\ref{alg:calibration} describes the process for calibrating the noise scale $\sigma$ in probabilistic numerical methods.
\begin{figure}
  \centering
        \includegraphics[width=0.5\textwidth]{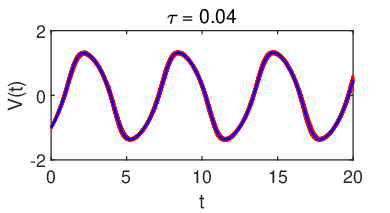}\hfill
        \includegraphics[width=0.5\textwidth]{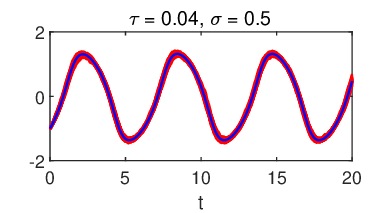}\vfill
        \includegraphics[width=0.5\textwidth]{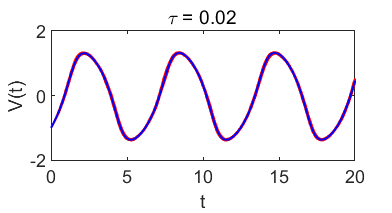}\hfill
        \includegraphics[width=0.5\textwidth]{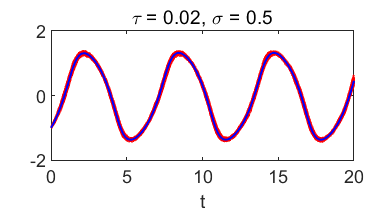}\vfill
        \includegraphics[width=0.5\textwidth]{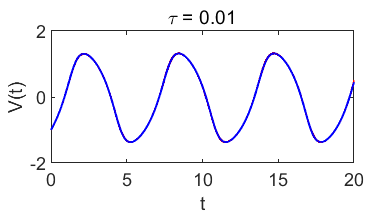}\hfill
        \includegraphics[width=0.5\textwidth]{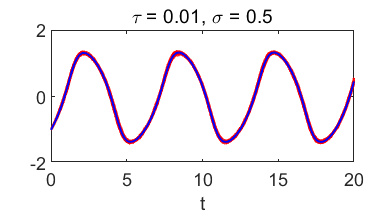}\vfill
\caption{The exact trajectory of $V$ (blue) and $100$ realizations (red) computed by the probabilistic implicit Euler method with noise scale $\sigma^*$ (left) and fixed noise scale $\sigma=0.5$ (right). }
    \label{fig:FitzHugh:sigma} 
\end{figure}
\begin{figure}
  \centering
        \includegraphics[width=1\textwidth]{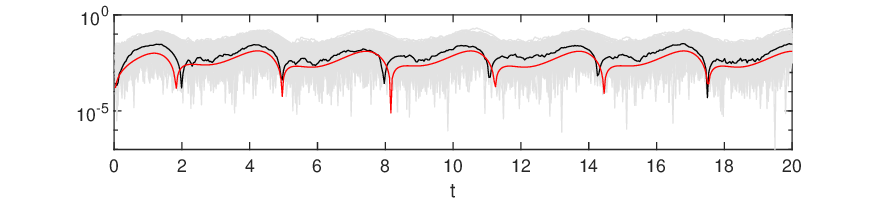}  
\caption{A comparison of the observed variations (gray) and their mean (black) in the calibrated probabilistic implicit Euler method using $\sigma^*$ and $\tau=0.04$ for the constrained FitzHugh--Nagumo model from Example~\ref{ex:FitzHugh:revisit}. The error indicator is marked in red.}
\label{fig:FitzHugh:variations} 
\end{figure}
\begin{algorithm}
    \caption{Calibration of noise scale $\sigma$}
    \label{alg:calibration}
    \begin{algorithmic}[1] 
    \State \textbf{Input}: step size $\tau$, consistent initial data $U^0 = u^0 \in \mathcal{V}$, right-hand sides $f$, $g$, initial scaled noise $\sigma$, number of realizations $M$
    \vspace{0.5em}
    \For{$n=0$ \textbf{to} $N-1$}
        \State compute deterministic solutions $u^n_{\tau,0}$ and $u^{2n}_{\tau/2,0}$ 
        \State compute error indicator $E^n \coloneqq u^n_{\tau,0} - u^{2n}_{\tau/2,0}$
        \State compute random solution $U^n_{\tau,\sigma}$ using a probabilistic solver with $M$ realizations
         \State compute the mean~$\mathbb{E}(U^n_{\tau,\sigma})$ and the variance~$\mathbb{V}(U^n_{\tau,\sigma})$ of the random solutions
        \State compute the Bhattacharyya distance $d(\tilde{\mu}^n_{\tau, \sigma}, \nu^n_{\tau,0})$ as in~\eqref{eq:Bdistance}
        \State set $\sigma = \arg\min_\sigma -\pi(\sigma)$
    \EndFor
    \end{algorithmic}
\end{algorithm}
%
%
\section{Conclusion}\label{sect:conclusion}
In this paper, we have introduced and analyzed a class of probabilistic time integrators of first and second order for the numerical solution of semi-linear parabolic equations with linear constraints. Such methods incorporate random perturbations, which do not affect the constraint manifold, in order to quantify the uncertainty induced by the time integration scheme. In terms of convergence, we have studied the mean square error and showed that a balanced inclusion of perturbations does not reduce the convergence order known from the deterministic setting. More precisely, the overall order equals the minimum of the deterministic convergence order and the order of the introduced statistical error. All results are validated through numerical experiments. 
%
%
%
%
\bibliographystyle{alpha} 
\bibliography{bib_probInt}
\appendix
\end{document}